\documentclass[12pt]{amsart}
\usepackage{hyperref}
\usepackage{amsmath}
\usepackage{amsthm}
\usepackage{mathrsfs}
\usepackage{amssymb}
\usepackage{tikz-cd}
\usepackage{comment}
\usepackage{mathtools}
\usepackage{manfnt}
\setlength{\textwidth}{\paperwidth}
\addtolength{\textwidth}{-2.3in}
\calclayout
\newcommand{\PP}{\mathbb{P}}

\newcommand{\CC}{\mathbb C}

\newcommand{\OO}{\mathcal O}
\newcommand{\OC}{\mathcal C}
\newcommand{\ra}{\rightarrow}

\newcommand{\cvg}{\mathrm{cvg}}
\newcommand{\scvg}{\mathrm{scvg}}
\newcommand{\Sym}{\mathrm{Sym}}

\newtheorem{theorem}{Theorem}[section]

\newtheorem{lemma}[theorem]{Lemma}
\newtheorem{proposition}[theorem]{Proposition}

\theoremstyle{definition}

\newtheorem*{remark}{Remark}
\newtheorem{example}[theorem]{Example}

\newtheorem{definition}[theorem]{Definition}

\newenvironment{manualProposition}[1]{%
  \manualtheoreminner
}{\endmanualtheoreminner}

\title{Covering gonalities of complete intersections in positive characteristic}
\author{Geoffrey Smith}
\date{}
\begin{document}
\maketitle
\begin{abstract}
We define the covering gonality and separable covering gonality of varieties over arbitrary fields,  generalizing the definition given by Bastianelli-de Poi-Ein-Lazarsfeld-Ullery for complex varieties. We show that over an arbitrary field a smooth multidegree $(d_1,\ldots,d_k)$ complete intersection in $\mathbb{P}^N$ has separable covering gonality at least $d-N+1$, where $d=d_1+\cdots+d_k$. We also show that the very general such hypersurface has covering gonality at least $\frac{d-N+2}{2}$.
\end{abstract}
\section{Introduction}
Let $C$ be a smooth projective curve over a field $k$. A classical measure of the failure of $C$ to be rational is the \emph{gonality} of $C$, $\text{gon}_k(C)$, defined as the minimum degree of a finite map $C\ra \PP^1_k$. Equivalently, $\text{gon}_k(C)$ is the minimal degree of an extension $K(C)/k(t)$, where $t$ is some transcendental element of $K(C)$. The significance of the gonality has inspired multiple generalizations to higher dimensional varieties. The first introduced is the \emph{degree of irrationality} of an irreducible variety $X$ of dimension $n$ over a field $k$, defined in \cite{MH82} as the quantity
\[
\text{irr}_k(X)=\min \{[K(X):k(x_1,\ldots,x_n)]\vert \{x_1,\ldots,x_n\} \text{ a transcendence basis for } K(X)\}.
\]

More recently, Bastinallei, de Poi, Ein, Lazarsfeld and Ullery \cite{BDELU17} introduced several other measures of irrationality for varieties over the complex numbers, including the \emph{covering gonality}, which is defined for a complex variety $X$ as the minimum gonality of a curve contained in $X$ passing through a general point. Several articles have computed or bounded the covering gonality for specific classes of complex varieties \cite{Bas12, BCFS19, Voi18, Mar19}. In this article we study the covering gonality of varieties over arbitrary fields.  
\begin{definition}\label{cvgdef}
Let $X$ be an irreducible proper variety of dimension $n$ over a field $k$. The \emph{covering gonality} of $X$ over $k$, denoted $\cvg(X)$, is the minimal $e$ such that there exists a diagram of irreducible varieties over $k$,
\begin{equation}\label{cvg}
\begin{tikzcd}
\OC \arrow[r, "f"] \arrow[d, "\pi"]
& X \\
\PP^1\times B 
\end{tikzcd},
\end{equation}
where $f$ is a dominant generically finite map and $\pi:\OC\ra \PP^1 \times B$ is a finite surjective morphism of degree $e$. The \emph{separable covering gonality} of $X$, denoted $\scvg_k(X)$, is defined as the minimal $e$ such that such a diagram exists with $f$ separable.
\end{definition}
\begin{remark}
It is essential for this definition that there be a single algebraic family of curves dominating $X$. If we merely require that for any geometric point $p$ of $X$ there exists a gonality $e$ curve passing through $p$, \cite{BT05} shows there would exist some non-uniruled varieties defined over $\mathbb F_p$ having covering gonality 1 over $\overline{\mathbb F}_p$, namely the non-supersingular Kummer surfaces. If $k$ is uncountable and $X$ is projective, this issue does not come up, by Proposition \ref{properties} (5).
\end{remark}

Of these two measures, the separable covering gonality is by far the better behaved invariant, and many results proven for the covering gonality of complex varieties hold for the separable covering gonality as is. For instance, we observe that the proof of Theorem A of \cite{BDELU17}  is sufficient to prove the following slight generalization.
\begin{proposition}[\cite{BDELU17}, Theorem A]\label{BDELU}
Let $X\subset \PP_k^{N}$ be a smooth geometrically irreducible complete intersection of $k$ hypersurfaces of degree $d_1,\ldots,d_k$ with $\sum_i d_i\geq N+1$. Then we have
\[
scvg_k(X)\geq \sum_{i} d_i-N+1
\]
\end{proposition}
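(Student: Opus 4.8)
The plan is to adapt the argument of Bastianelli–de Poi–Ein–Lazarsfeld–Ullery, whose key input is the positivity of the canonical bundle of $X$ and a "bend-and-break"-free point-separation statement coming from the fact that $K_X$ separates enough points. Specifically, recall that for a smooth complete intersection $X$ of multidegree $(d_1,\ldots,d_k)$ in $\PP^N_k$ one has $\omega_X \cong \OO_X(d-N-1)$ where $d=\sum_i d_i$. The BDELU proof of Theorem A in the complex case proceeds by taking a covering family of curves $\OC \to X$ computing the covering gonality, passing to a desingularized general member $\widetilde{C} \to X$, and showing that the pullback of $\omega_X$ to $\widetilde{C}$ forces the gonality to be large via the fact that a gonality pencil on a curve $C$ imposes at most $\gon(C)$ conditions on sections — i.e. if $|L|$ is a pencil of degree $e=\gon(C)$ and $p$ is a general point of $C$, then $h^0(C, K_C) - h^0(C, K_C - p) \le \ldots$, combined with the "BDELU lemma" that for a very ample line bundle separating points one gets $e \ge \deg - $ something. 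The heart is: if $f:\widetilde{C}\to X$ is the general member, then $f^*\OO_X(1)$ is a line bundle on $\widetilde{C}$ whose sections separate general points (since $\OO_X(1)$ is very ample on $X$ and $f$ is birational onto its image), and $f^*\omega_X = f^*\OO_X(d-N-1)$; a general point $p\in \widetilde C$ then fails to be a base point of the gonality pencil, and one runs the linear-series inequality.

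First I would check that every ingredient of the BDELU argument is characteristic-free. The adjunction computation $\omega_X\cong\OO_X(d-N-1)$ holds over any field. The statement that $\OO_X(1)$ separates points and tangent vectors is just very ampleness of the hyperplane bundle, fine in any characteristic. The one step that classically invokes characteristic zero is the use of generic smoothness / Bertini to ensure the general member $C_b$ of the covering family is smooth, or at least to desingularize it nicely and control the behavior of $f$ on it — but here the hypothesis that $f$ is \emph{separable} is exactly what we need: separability of $f:\OC\to X$ guarantees that for a general point $b\in B$ the restricted map $f|_{C_b}:C_b\to X$ is generically étale onto its image, so that the ramification divisor behaves as expected and $f^*\OO_X(1)$ genuinely separates a general pair of points of the normalization $\widetilde{C_b}$. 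So I would state and use: since $\scvg$ is defined with $f$ separable, after base change and normalization we obtain a smooth projective curve $\widetilde{C}$ over a (possibly larger) field, a finite separable map $\widetilde\pi:\widetilde C\to\PP^1$ of degree $e=\scvg_k(X)$, and a map $g:\widetilde C\to X$, birational onto a curve through a general point, such that $g^*\OO_X(1)$ separates two general points of $\widetilde C$.

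The core computation then runs as follows. Let $L=g^*\OO_X(1)$, so $g^*\omega_X = L^{\otimes(d-N-1)}$. Let $A=\widetilde\pi^*\OO_{\PP^1}(1)$, the gonality pencil, of degree $e$. For a general point $p\in\widetilde C$, $p$ is not a base point of the pencil $|A|$, so there is a divisor $D\in|A|$ with $D = p + D'$, $D'$ effective of degree $e-1$; and for two general points $p_1,p_2$, the sections of $L$ separate $p_1,p_2$. The BDELU inequality — which is the assertion that a line bundle $M$ on $\widetilde C$ with $h^0(M)\ge 2$ separating a general pair of points satisfies $\deg M \ge \gon(\widetilde C)+ (\text{something})$, packaged via subtracting successive points from the gonality pencil — gives that $g^*\omega_X\otimes A^{-1}$ (or the relevant twist) has no sections unless $\deg g^*\omega_X \ge$ a bound forcing $e\ge d-N+1$. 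Concretely: by Riemann–Roch and the point-separation property, for a general effective divisor $p_1+\cdots+p_{e-1}$ obtained from a general member of $|A|$ minus a general point, one shows the points impose independent conditions on $|K_{\widetilde C}|$ unless the genus is too small, and comparing with the fact that $K_{\widetilde C}\ge g^*\omega_X$ (adjunction, since $g$ is a map to a smooth variety and the ramification divisor $R\ge 0$) yields $\deg g^*\omega_X \le \deg K_{\widetilde C}$ and the numerical inequality closes to give $\scvg_k(X)=e\ge \sum_i d_i - N+1$.

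I expect the main obstacle to be purely expository: verifying that the BDELU proof contains no hidden use of characteristic zero beyond generic smoothness, and that separability of $f$ is precisely the substitute needed — in particular that $g^*\OO_X(1)$ still separates a general pair of points of $\widetilde C$ (this can fail if $f$ is inseparable, e.g. a Frobenius twist collapses separation). Once that is confirmed, the remaining arguments — adjunction to get $K_{\widetilde C}\ge g^*\omega_X$, the linear-series inequality on the curve, and the numerical bookkeeping with $d=\sum_i d_i$ — go through verbatim. Since the proposition explicitly attributes the result to \cite{BDELU17}, the write-up should be a short verification that the cited proof is robust to the change of hypotheses rather than a new argument.
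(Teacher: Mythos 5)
Your proposal takes essentially the same route as the paper: both check that the BDELU argument is characteristic-free once $f$ is assumed separable, using separability (together with a general member of the covering family being birational to its image) to get $\omega_{C_b}\cong f^*\omega_X\vert_{C_b}+R'$ with $R'$ effective, and then running the Cayley--Bacharach-type step in which a fiber $p_1+\cdots+p_e$ of the gonality pencil fails to impose independent conditions on $\omega_{C_b}$ while hypersurfaces of degree $m=\sum_i d_i-N-1$ would separate up to $m+1$ of the points $f(p_i)$, forcing $e\geq m+2=\sum_i d_i-N+1$. Your statement of the core inequality is loose (``the BDELU inequality''), but the precise mechanism you gesture at is exactly the independent-conditions argument the paper writes out, so this is the same proof.
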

The covering gonality is more poorly behaved, as the following example of Shioda and Katsura illustrates.
\begin{example}[\cite{SK79}, Theorem III]\label{shioda}
Let $k$ be a field of positive characteristic $p$. Let $r,d$ be positive integers with $r$ even and $d\geq 4$. Then, if there exists an integer $v$ such that 
\[
p^v\equiv -1 \pmod{d},
\]
then the Fermat hypersurface cut out by the homogeneous polynomial $x_0^d+\ldots+x_{r+1}^d=0$ in $\PP^{r+1}_k$ is unirational.
\end{example}
So there are smooth hypersurfaces $X$ with $d-n$ arbitrarily large such that $\cvg(X)=1$ if $k$ has positive characteristic, so Proposition \ref{BDELU} has no chance of being true without modification for the covering gonality. Instead, we have the following bound.
\begin{theorem}\label{mainCompleteIntersections}
Let $X\subset \PP^N$ be a very general complete intersection variety of  multidegree $(d_1,\ldots,d_k)$ over an uncountable field $k$. Then 
\[
cvg_k(X)\geq \frac{\sum_id_i-N+2}{2}.
\]
\end{theorem}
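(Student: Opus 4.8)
\emph{Reductions.} A covering family over $k$ base-changes to one over $\overline{k}$, so $\cvg_{\overline k}(X_{\overline k})\le\cvg_k(X)$ and we may assume $k=\overline k$ (still uncountable). If $d:=\sum_i d_i\le N$ the asserted bound is $\le 1$ and there is nothing to prove; so assume $d\ge N+1$, whence $X$ is smooth with $\omega_X=\OO_X(d-N-1)$. Since $X$ is a complete intersection the Koszul resolution of $I_X$ gives $H^1(\PP^N,I_X(m))=0$ for all $m$, so $H^0(\PP^N,\OO(m))\twoheadrightarrow H^0(X,\OO_X(m))$; combined with the $m$-very ampleness of $\OO_{\PP^N}(m)$ this shows $\OO_X(m)$ is $m$-very ample, so in particular $\omega_X$ is $(d-N-1)$-very ample. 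Fix a diagram \eqref{cvg} realizing $e:=\cvg_k(X)$. If $f$ is separable, that same diagram witnesses $\scvg_k(X)\le e$, so $e\ge d-N+1\ge\frac{d-N+2}{2}$ by Proposition \ref{BDELU}; hence we assume henceforth that $f$ is inseparable.

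\emph{Reduction to a purely inseparable cover.} Let $E$ be the purely inseparable closure of $K(X)$ in $K(\OC)$, so $E/K(X)$ is purely inseparable of degree $p^{s}$ with $s\ge 1$ and $K(\OC)/E$ is separable; let $W$ be the variety with $K(W)=E$, so $f$ factors as $\OC\xrightarrow{g}W\xrightarrow{h}X$ with $g$ separable, dominant and generically finite and $h$ finite purely inseparable. Then $W$ is an irreducible proper variety, and the diagram $\OC\xrightarrow{g}W$, $\OC\xrightarrow{\pi}\PP^1\times B$ witnesses $\scvg_k(W)\le e$. It therefore suffices to prove $\scvg_k(W)\ge\frac{d-N+2}{2}$. (Without very-generality this would be false: by Example \ref{shioda} a unirational Fermat has a purely inseparable cover $W$ with $\scvg_k(W)=1$, yet $d-N$ may be large.)

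\emph{The core estimate.} We run the Cayley--Bacharach/very-ampleness argument underlying Proposition \ref{BDELU}, but with positivity measured by $|\omega_X|$ on $X$ --- which is $(d-N-1)$-very ample --- rather than by $|\omega_W|$, which may be negative. Fix a separable covering family of $W$ realizing $e':=\scvg_k(W)$; let $C$ be a general member (smooth), with gonality pencil $A$ of degree $e'$, and let $\rho\colon C\to X$ be its composite with $h$, purely inseparable onto its image. Since $k$ is perfect, $\rho=\bar\rho\circ F^{a}$ where $F^a\colon C\to C^{(p^a)}$ is a power of the relative Frobenius and $\bar\rho\colon C^{(p^a)}\to X$ is birational onto its image; moreover $C^{(p^a)}\cong C$ has gonality $\le e'$. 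The delicate point is that the inclusion $\bar\rho^*\omega_X\hookrightarrow\omega_{C^{(p^a)}}$ which drives BDELU's bound may fail --- it fails exactly when the image curve has genus too small relative to its $\omega_X$-degree, the phenomenon exhibited by Example \ref{shioda}. This is where very-generality is used: for very general $X$ we have $\mathrm{NS}(X)=\ZZ\cdot H$ (Noether--Lefschetz in the surface case, Grothendieck--Lefschetz otherwise), so $\rho(C)$ has $\omega_X$-degree an integral multiple of $d-N-1$ and the inclusion can only fail by a bounded, ``Frobenius-twist'' amount. Tracking this, together with the pencil $A$ and the multiple $2A$, one produces for general $x\in X$ a length-$(2e'-1)$ subscheme of $X$ passing through $x$ that moves in a pencil and is Cayley--Bacharach with respect to $|\omega_X|$. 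But $\omega_X$ is $(2e'-2)$-very ample as soon as $2e'-2\le d-N-1$, and a length-$(2e'-1)$ subscheme that is Cayley--Bacharach for a $(2e'-2)$-very ample system cannot exist; hence $2e'-2>d-N-1$, i.e.\ $e'\ge\frac{d-N+2}{2}$. With $\scvg_k(W)\le e$ this proves the theorem.

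\emph{Main obstacle.} The crux is the core estimate --- specifically, the quantitative control of the failure of $\bar\rho^*\omega_X\hookrightarrow\omega_{C^{(p^a)}}$ using $\mathrm{NS}(X)=\ZZ\cdot H$, so that the loss of positivity is a factor of $2$ and not a factor of $p$. Equivalently, one must exclude the possibility that the family of image curves in $X$ is tangent to a subfoliation of $T_X$ of too small degree; this ought to follow from the slope-stability of $T_X$ for a very general complete intersection, and establishing it is the real work, the earlier steps being formal.
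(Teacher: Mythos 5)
Your argument is not a proof: its central step (the ``core estimate'') is only asserted, and it is precisely the point where all the difficulty of the theorem lives. You acknowledge this yourself in the final paragraph, but the gap is not a technicality one could expect to fill routinely. The claim that $\mathrm{NS}(X)=\ZZ\cdot H$ (or slope stability of $T_X$) controls the failure of the inclusion $\bar\rho^*\omega_X\hookrightarrow\omega_{C^{(p^a)}}$ ``by a factor of $2$ and not a factor of $p$'' is never substantiated, and there is no evident mechanism for it: the BDELU inclusion comes from separability of the covering family (a trivial quotient of the normal sheaf along a general member), and once the family is inseparable the loss is governed by the inseparable degree $p^a$, which is unbounded and has nothing to do with the N\'eron--Severi group; Example \ref{shioda} is exactly the warning that no positivity-type argument of this shape is known to survive. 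There is also a smaller but real error earlier: a finite extension $K(\OC)/K(X)$ does not in general factor as a separable extension on top of the purely inseparable closure $E$ (that order of factorization exists for normal extensions, not arbitrary ones), so even your reduction to bounding $\scvg_k(W)$ for a purely inseparable cover $W$ of $X$ is not justified as written. Finally, note that $W$ need not be smooth, so Proposition \ref{BDELU} and its Cayley--Bacharach machinery do not apply to it off the shelf.

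The paper proves the theorem by an entirely different, Chow-theoretic route that avoids positivity on inseparable covers altogether: by Theorem \ref{RiedlWoolf} a general complete intersection of the same multidegree in $\PP^{N+c}$, $c=\sum_i d_i-N-1$, is Chow nondegenerate, and Theorem \ref{main} shows that a very general codimension $c$ plane section of a Chow nondegenerate variety has covering gonality at least $\frac{3+c}{2}$; the latter is proved by an incidence-correspondence codimension count (the spaces $\mathcal{X}_{c'}$ of triples $(\Lambda,Y,p)$ and the loci $R_{c'}$), where a covering family of gonality $e$ curves forces $\mathrm{codim}(R_c)\leq 2e-2$, Chow nondegeneracy gives $\mathrm{codim}(R_0)\geq 1$, and Lemma \ref{riedlYang} (after Riedl--Yang) propagates the codimension up by $1$ per plane section, yielding $c+1\leq 2e-2$. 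If you want to pursue a proof along your lines, you would need a genuinely new idea to control inseparable covering families; the paper's approach sidesteps that obstacle, which is why it succeeds.
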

This will be a consequence of a somewhat more general result, Theorem \ref{main}, which connects a certain property of the Chow group of zero-cycles of a variety $V$ to a lower bound on the covering gonality of a very general plane section of $V$.

This paper is organized as follows. In Section \ref{basicProperties} we collect some useful properties of the covering gonality, including an alternate definition which is manifestly a birational invariant. Then in Section \ref{bounds} we prove Proposition \ref{BDELU} and Theorem \ref{main}, deriving Theorem \ref{mainCompleteIntersections}. Section \ref{comparisonProofs} contains proofs of some of the more involved formal properties of covering gonalities asserted in Section \ref{basicProperties}.
\subsection{Acknowledgements}
We would like to thank David Yang for helpful conversations throughout the project, especially the suggestion that \cite[Proposition 3.5]{RY16} could apply in the proof of Theorem \ref{main}. We would also like to thank Joe Harris for useful conversations.
\section{Formal properties of covering gonalities}\label{basicProperties}
In this section we prove some basic properties of covering gonalities. To help with this, we introduce an ancillary definition of covering gonality for arbitrary algebraic function fields.
\begin{definition}\label{cvgdef2}
Let $K$ be an algebraic function field over $k$. The \emph{covering gonality} of $K$, $\cvg_k(F)$, is the minimal $e$ such that there exists two fields $K_C$ and $K_B$ over $k$ and a diagram of finite field extensions
\begin{equation}\label{cvg2}
\begin{tikzcd}
K_C \arrow[r, hookleftarrow, "f"] 
&K\\
K_B(t)  \arrow[u,hookrightarrow,"\pi"]
\end{tikzcd},
\end{equation}
such that $[K_C:K_B(t)]=e$. The \emph{separable covering gonality} of $K$ is the minimal such $e$ such that the diagram (\ref{cvg2}) exists with $f$ separable.
\end{definition}
As the name suggests, this invariant is closely connected with the covering gonality.
\begin{proposition}\label{comparison}
If $X$ is a proper irreducible variety defined over the field $k$ with function field $K$, then $\cvg(X)=\cvg_k(K)$ and  $\scvg_k(X)=\scvg_k(K)$.
\end{proposition}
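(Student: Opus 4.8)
The plan is to set up a dictionary between the geometric data of Definition~\ref{cvgdef} and the field data of Definition~\ref{cvgdef2}, and to check that it respects the number $e$ and the separability condition. Write $K=k(X)$. One inequality is nearly immediate: given a diagram \eqref{cvg} with $\deg\pi=e$, put $K_C=k(\OC)$ and $K_B=k(B)$, so that $k(\PP^1\times B)=K_B(t)$. Since $f$ is dominant and generically finite, $f^*\colon K\hookrightarrow K_C$ is a finite extension; since $\pi$ is finite and surjective between integral varieties of the same dimension, $\pi^*\colon K_B(t)\hookrightarrow K_C$ is finite of degree $\deg\pi=e$. This is exactly a diagram \eqref{cvg2} realizing the same $e$, and if $f$ is separable then $K_C/K$ is a separable extension; hence $\cvg_k(K)\le\cvg(X)$ and $\scvg_k(K)\le\scvg_k(X)$.

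For the reverse inequalities we start from a diagram \eqref{cvg2} with $[K_C:K_B(t)]=e$ and build the geometric picture. Choose a proper (say projective) integral model $B_0$ of $K_B$ over $k$; then $\PP^1\times B_0$ is a proper model of $K_B(t)$. The inclusions $K\hookrightarrow K_C$ and $K_B(t)\hookrightarrow K_C$ give rational maps to $X$ and to $\PP^1\times B_0$ from any model of $K_C$, and both targets are proper, so we can pass to a proper integral model $\OC_1$ of $K_C$ on which both become morphisms: take the closure of the graph inside $\OC_0\times X\times(\PP^1\times B_0)$ for any initial proper model $\OC_0$. This uses only elimination of indeterminacy toward proper targets, not resolution of singularities, which matters since $k$ is arbitrary. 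We obtain $f\colon\OC_1\to X$, dominant and generically finite, and separable whenever $K_C/K$ is, together with $g\colon\OC_1\to\PP^1\times B_0$, proper and generically finite of degree $e$.

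It remains to make $g$ finite without disturbing $f$. The key observation is that $B$ in Definition~\ref{cvgdef} need not be proper, so we may shrink it. Over the generic point $\eta=\Spec K_B$ of $B_0$ the fiber of $\OC_1\to B_0$ is an integral curve carrying a finite degree-$e$ map to $\PP^1_{K_B}$ coming from $K_B(t)\subseteq K_C$; spreading this out, there is a dense open $B\subseteq B_0$ over which $g$ has finite fibers, hence (being proper) is finite, over $\PP^1\times B$. Put $\OC=g^{-1}(\PP^1\times B)$: the restriction $\pi\colon\OC\to\PP^1\times B$ is finite and surjective of degree $e$, while $f|_{\OC}\colon\OC\to X$ remains dominant and generically finite, and separable in the separable case, because $\OC$ is a dense open subset of $\OC_1$ with the same function field. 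This diagram \eqref{cvg} realizes $e$, giving $\cvg(X)\le\cvg_k(K)$ and $\scvg_k(X)\le\scvg_k(K)$; together with the first paragraph, the two equalities follow.

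The only genuinely delicate point is the shrinking step: one must argue that the locus in $\PP^1\times B_0$ where $g$ has positive-dimensional fibers does not dominate $B_0$. This follows because $\OC_1$ is irreducible and $g$ is dominant — a prime divisor $D\subseteq\PP^1\times B_0$ over which $g$ had positive-dimensional fibers would, by a dimension count, satisfy $g^{-1}(D)=\OC_1$, forcing $g$ non-dominant — so the image of that locus in $B_0$ is a proper closed subset, which we delete; then ``proper plus quasi-finite implies finite'' upgrades quasi-finiteness over $\PP^1\times B$ to finiteness. Everything else (the behavior of function fields, degrees, dominance and separability under the passages used) is routine bookkeeping.
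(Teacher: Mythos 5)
Your argument is correct, and its skeleton is the same as the paper's: one inequality by taking function fields of a covering diagram \eqref{cvg}, the other by building a geometric model of a field diagram \eqref{cvg2} and shrinking the base until $\pi$ becomes finite. The difference lies in how the reverse direction is implemented. The paper first takes the regular projective curve over $K_B$ with function field $K_C$ and spreads it out to a proper curve fibration $\OC'\to B'$; it then treats $t$ and the map to $X$ as rational maps whose indeterminacy loci have codimension at least $2$, and deletes from $B'$ the images of those loci together with the locus where $t$ is constant on a component of a fiber, so that finiteness of $\pi$ follows from properness of the fibration plus fiberwise finiteness over $\PP^1$. You instead compactify everything (proper models $\OC_0$, $B_0$), eliminate all indeterminacy at once by taking the closure of the graph in $\OC_0\times X\times(\PP^1\times B_0)$, and get finiteness from your dimension count: the locus of positive-dimensional fibers of $g$ is closed, its image in $\PP^1\times B_0$ has codimension at least $2$ (else its preimage would be all of $\OC_1$, contradicting surjectivity of $g$), so its image in $B_0$ is a proper closed subset, and over its complement ``proper plus quasi-finite implies finite'' applies. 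Your route avoids the codimension-$2$ statement about indeterminacy loci (which implicitly wants a normal total space) and the regular projective model of the generic curve, so it is slightly more robust as written; the paper's route has the mild advantage of directly producing $\OC\to B$ as a family of curves, the shape of family one actually works with elsewhere in the paper. Both proofs turn on the same key point, that the bad locus downstairs has positive codimension in $B$, and your semicontinuity argument is a clean substitute for the paper's removal of $\pi(Z)$, $Z'$, and $\pi(Z'')$.
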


Working with covering gonalities for algebraic function fields simplifies some of our discussion. In particular, it makes base change more clear.
\begin{proposition}\label{fieldInvariance}
Let $K$ be an algebraic function field over a field $k$. If $L/k$ is an arbitrary field extension of $k$ and $K'$ a field in $K \otimes_k L$, then $\cvg_k(K)=\cvg_L(K')$. If $L/k$ is contained in a separable extension of the field of constants of $K/k$, then $\scvg_k(K)=\scvg_L(K')$.
\end{proposition}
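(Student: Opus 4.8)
The plan is to prove both claims by relating diagrams of the form (\ref{cvg2}) over $k$ with diagrams over $L$, using the characterization of covering gonality via finite field extensions (Definition \ref{cvgdef2}). The key point throughout is that $K \otimes_k L$ may fail to be a field, so one must pass to a well-chosen quotient; I would fix $K'$ as in the statement, so $K'$ is an algebraic function field over $L$. First I would establish the easy inequality $\cvg_L(K') \le \cvg_k(K)$: given a diagram $K_B(t) \hookrightarrow K_C$ with $K_C \hookleftarrow K$ of degree $e$, I would tensor the whole diagram with $L$ over $k$ and pass to appropriate quotient fields lying over $K'$. Concretely, choose a prime of $K_C \otimes_k L$ lying over the chosen prime of $K \otimes_k L$ giving $K'$; the resulting residue field $K_C'$ receives $K'$ and contains the image of $(K_B \otimes_k L)_{\text{loc}}(t)$, and the degree of this extension is at most $e$ since degrees can only drop under such specialization of tensor products. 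If $f: K \hookrightarrow K_C$ is separable and $L/k$ is contained in a separable extension of the constant field, then $K'/K$ and $K_C'/K_C$ are separable (being obtained by a separable base change), so $K' \hookrightarrow K_C'$ is separable, giving the corresponding inequality for $\scvg$.

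For the reverse inequality $\cvg_k(K) \le \cvg_L(K')$, the idea is a standard ``finite approximation'' or ``spreading out'' argument: given a witnessing diagram over $L$, all the data (the finitely many elements generating $K'_C$ over $K'$, the element $t$, the defining polynomials) involve only finitely many elements of $L$, hence lie in a finitely generated, and then finitely generated \emph{algebraic-over-}$k$-or-purely-transcendental, subextension $L_0/k$. One then descends the diagram to $L_0$. For the purely transcendental part of $L_0/k$ one specializes the transcendentals to elements of $k$ (or of $\bar k$), using that $k$ is infinite in the relevant sense — or, more robustly, one observes that adjoining independent transcendentals to $k$ does not change $\cvg$ because $k(s_1,\ldots,s_m)(t') \cong k(t')(s_1,\ldots,s_m)$ reshuffling allows the transcendentals to be absorbed into the base $K_B$. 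For the finite algebraic part, a further finite base extension only enlarges $K_B$, not the degree $e$ of $\pi$. The separable case requires tracking that all these reductions preserve separability of $f$, which holds because we restricted to $L/k$ separable over the constant field.

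The main obstacle I anticipate is the behavior of tensor products of fields that are not separable: $K \otimes_k L$ need not be reduced when $L/k$ is inseparable, and even when reduced it splits into several factors, so ``the'' field $K'$ is genuinely a choice and one must check the argument is insensitive to it — this is presumably why the separable hypothesis is imposed only for the $\scvg$ statement, while for $\cvg$ one can be cavalier since inseparable extensions in characteristic $p$ do not raise the covering gonality (a purely inseparable extension of $K_B(t)$ can be absorbed, as one may extract $p$-th roots of $t$ without changing that $K_B(t^{1/p^r})$ is still a rational function field over a field of constants). I would handle this by first reducing to the case where $L/k$ is either algebraic or purely transcendental, treating the purely transcendental case by the reshuffling trick above and the algebraic case by the spreading-out argument, and in the separable setting invoking that separability of $f$ is preserved under all these operations precisely because $L/k$ was taken separable over the field of constants of $K$.
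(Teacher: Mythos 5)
Your overall architecture matches the paper's: prove the easy inequality by tensoring the diagram with $L$ and passing to a quotient field, and prove the reverse by spreading out to a finitely generated subextension $L_0/k$ and treating the algebraic and purely transcendental parts separately. However, your preferred device for the purely transcendental part --- the ``reshuffling'' claim that the new transcendentals $s_1,\ldots,s_m$ can simply be absorbed into the base $K_B$ --- does not work, and this is the crux of the whole argument. If you start from a covering diagram for $K\otimes_k k(s)$ over $k(s)$, then $K_C$ is a finite extension of $K(s)$ and hence has transcendence degree $\operatorname{trdeg}_k(K)+1$ over $k$; after regarding $K_B$ as a function field over $k$, the inclusion $K_B(t)\hookrightarrow K_C$ is still finite, but the map $K\hookrightarrow K_C$ is no longer finite, so Definition \ref{cvgdef2} is not satisfied for $K/k$. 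Geometrically, absorbing $s$ into the base produces a covering family of $X\times\mathbb{A}^1$, not of $X$; this only shows $\cvg_k(K(s))\le \cvg_{k(s)}(K(s))$, which is not what is needed. The correct route is exactly the specialization you mention only in passing: choose models $\OC$, $B$, $X$ over $k$ with a map to $\PP^1$ recording the variable $s$, and show that for all but finitely many $k$-points $p\in\PP^1$ the fibers $\OC_p$, $B_p$ remain integral and the restricted maps remain generically finite with $\deg(\OC_p\to \PP^1\times B_p)$ unchanged (here one needs $k$ infinite, which is arranged via the algebraic case). As written, your ``more robust'' alternative is wrong and your fallback is a one-line gesture, so the hard direction of the purely transcendental case is genuinely missing.

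Two smaller points. First, for the $\scvg$ statement the hypothesis is that $L$ lies in a separable extension of the \emph{field of constants} $k^c$ of $K/k$, not that $L/k$ is separable; your separable-base-change argument needs the preliminary reduction replacing $k$ by $k^c$ (the paper's Lemma \ref{constants}, proved by noting a diagram over $k^c$ is a diagram over $k$ and conversely one may enlarge $K_B$ by $k^c$ without increasing the degree). Second, your aside that inseparable extensions can be handled by extracting $p$-th roots of $t$ is unnecessary for the $\cvg$ statement and is not how the descent goes; the algebraic case is handled by descending generators and relations to a finite subextension $L'/k$ and observing that the resulting diagram over $L'$ is already a diagram over $k$ of the same degree, which is the part of your sketch that is correct.
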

The assumption that $L/k$ be contained in a separable extension of the field of constants of $K$ is necessary; in the unirational hypersurfaces of Example \ref{shioda}, the separable covering gonality of $X$ can be arbitrarily large by Proposition \ref{BDELU}, whereas the separable covering gonality of the rational space covering it is 1. 

Together, Propositions \ref{comparison} and \ref{fieldInvariance} indicate the first key properties of the covering gonality; it is a \emph{birational invariant} of $X$ and (for the most part) does not depend on the field of definition $k$. Their proofs are straightforward, and we defer them to Section \ref{comparisonProofs}.

We collect here some useful properties of covering gonalities, which echo comparable properties for the invariants defined in \cite{BDELU17} over $\CC$.

\begin{proposition}\label{properties}
\begin{enumerate}
\item Both $\cvg(X)$ and $\scvg_k(X)$ are birational invariants of $X$. 
 \item Given an extension $L/k$, if $X_L$ is any irreducible component of $X\times_k \mathrm{Spec}(L)$, then $\cvg(X)=\cvg(X_L)$. If $L/k$ is contained in a separable extension of the field of constants of $X$, then $\scvg(X)=\scvg(X_L)$.
\item $X$ satisfies $\cvg(X)=1$ if and only if $X$ is uniruled. X satisfies $\scvg_k(X)=1$ if and only if $X$ is separably uniruled over $k$.
\item If $X$ is a smooth irreducible projective curve and $k$ is algebraically closed, then $\scvg_k(X)=\mathrm{cvg}(X)=\mathrm{gon}_k(X)$.
\item If $k$ is uncountable and algebraically closed and $X$ is projective, then $\cvg(X)$ is equal to the minimum gonality of a curve through a general point of $X$. 
\item If $f:X\dashrightarrow Y$ is a dominant generically finite rational map of projective varieties over $k$, 
then $\cvg(Y)\leq \cvg(X)$.
If $f$ is also separable, then $\mathrm{scvg}_k(Y)\leq \mathrm{scvg}_k(X)$.

\end{enumerate}
\end{proposition}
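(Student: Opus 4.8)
The plan is to reduce most parts to the function-field description of Proposition~\ref{comparison} (Definition~\ref{cvgdef2}): a covering diagram for $X$ corresponds to a tower of algebraic function fields $K_B(t)\subseteq K_C\supseteq K(X)$ with $[K_C:K_B(t)]=e$, where for $\scvg$ one additionally asks that $K_C/K(X)$ be separable. With this dictionary, \textbf{(1)} is immediate, since birational varieties have $k$-isomorphic function fields. For \textbf{(2)}, the function field $K'$ of a component of $X\times_k\Spec(L)$ is a field in $K(X)\otimes_k L$, so Propositions~\ref{comparison} and~\ref{fieldInvariance} give $\cvg(X_L)=\cvg_L(K')=\cvg_k(K(X))=\cvg(X)$, and the hypothesis on $L$ in the separable case is exactly that of Proposition~\ref{fieldInvariance}. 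For \textbf{(6)}, the map $f$ realizes $K(Y)$ as a finite subextension of $K(X)$; given a witnessing tower $K_B(t)\subseteq K_C\supseteq K(X)$ for $\cvg_k(K(X))=e$, the \emph{same} fields form a tower $K_B(t)\subseteq K_C\supseteq K(Y)$, so $\cvg_k(K(Y))\le e$, and if $K_C/K(X)$ and $K(X)/K(Y)$ are separable then so is $K_C/K(Y)$ by transitivity, giving the separable statement. For \textbf{(3)}, a tower with $e=1$ is exactly the data $K(X)\subseteq K_C=K_B(t)$ with $K_B(t)/K(X)$ finite, i.e.\ a dominant generically finite rational map $\PP^1\times B\dashrightarrow X$; comparing transcendence degrees forces $\dim B=\dim X-1$, so this says precisely that $X$ is uniruled, and imposing separability of $K_C/K(X)$ makes it say that $X$ is separably uniruled.

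For \textbf{(4)}, taking $K_C=K(X)$, $K_B=k$ and $t$ realizing the gonality gives $\cvg(X)\le\scvg_k(X)\le\mathrm{gon}_k(X)$. Conversely, in any tower for $K(X)$ the field $K_B$ has transcendence degree $0$ over the algebraically closed field $k$, hence $K_B=k$; thus $K_C$ is the function field of a smooth projective curve $C$ equipped with a degree-$e$ morphism $\psi\colon C\to\PP^1$ and a finite surjective morphism $\phi\colon C\to X$, and it remains to prove $\mathrm{gon}_k(X)\le\mathrm{gon}_k(C)$. For this, push the pencil $\psi$ forward: $q\mapsto\phi_*\psi^{-1}(q)$ is a morphism $\PP^1\to\Sym^e(X)$, nonconstant since otherwise $\phi(C)$ would be finite; composing with the Abel--Jacobi map to $\mathrm{Pic}^e(X)$, a torsor under an abelian variety, yields a constant map, so the image lies in one fibre, namely a linear system $|D|$ with $\deg D=e$ and $\dim|D|\ge 1$. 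A pencil inside $|D|$, after removing base points, is a base-point-free $g^1_{\le e}$ on $X$, so $\mathrm{gon}_k(X)\le e$, as desired.

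For \textbf{(5)}, write $g(X)$ for the minimum gonality of a curve through a general point of $X$. The inequality $g(X)\le\cvg(X)$ is geometric: in a covering family of degree $e=\cvg(X)$, for general $b\in B$ an irreducible component of $\pi^{-1}(\PP^1\times\{b\})$ is a curve mapping to $\PP^1$ with degree at most $e$, hence of gonality $\le e$, whose image in $X$ is again a curve of gonality $\le e$ by the monotonicity established in Part~(4); as $b$ varies these images cover a dense open subset of $X$. The reverse inequality $\cvg(X)\le g(X)$ is where uncountability enters. One parameterizes pairs consisting of a curve in $X$ together with a degree-$\le g(X)$ morphism to $\PP^1$ by a scheme $M$ locally of finite type over $k$, with universal families $\mathcal{C}_i\to M_i$ over its (countably many) irreducible components and maps $\mathcal{C}_i\to X$, $\mathcal{C}_i\to\PP^1\times M_i$ of relative degree $\le g(X)$. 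Since $k$ is uncountable, $X$ is not a countable union of proper closed subvarieties, so some $\mathcal{C}_{i_0}\to X$ is dominant; restricting $M_{i_0}$ to a general subvariety $B$ of dimension $\dim X-1$ preserves dominance and, after passing to a component of the total space dominating $X$, produces a covering family of $X$ with $\pi$ of degree $\le g(X)$. By Proposition~\ref{comparison} one need not arrange $\pi$ to be finite, only to have the correct generic degree.

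The step I expect to be the real obstacle is the inequality $\cvg(X)\le g(X)$ in Part~(5): constructing the parameter scheme of curves of bounded gonality in $X$ carrying compatible universal families, and then, once the countable-union argument singles out a dominating component, cutting the base down to dimension $\dim X-1$ while keeping control of the degree of $\pi$. A lesser difficulty is the monotonicity $\mathrm{gon}_k(X)\le\mathrm{gon}_k(C)$ for finite covers of curves used in Parts~(4) and~(5); the Abel--Jacobi argument above settles it in all characteristics, but one could also cite it.
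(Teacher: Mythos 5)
Your proposal is correct, and for parts (1), (2), (3) and (6) it is essentially the paper's own argument (reduce to the function-field formulation via Propositions \ref{comparison} and \ref{fieldInvariance}, read off uniruledness from a degree-one diagram, and reuse the tower over the smaller function field for (6)). The differences are in (4) and (5). For (4) the paper simply cites Proposition A.1.vii of \cite{Poo07}, whereas you give a self-contained proof: reduce to $K_B=k$ by transcendence degree, then prove monotonicity of gonality under a finite cover $\phi\colon C\to X$ by pushing the pencil forward to $\Sym^{e}(X)$ and using that the composite to $\mathrm{Pic}^{e}(X)$ is constant on $\PP^1$; this is characteristic-free and correct, and it buys independence from the citation at the cost of a little length. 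For (5) the underlying idea is the same (uncountability forces a single bounded family whose total space dominates $X$), but the parametrization differs: the paper takes $B=\mathrm{Hom}_d(\PP^1,\Sym^e(X))$ and recovers $\OC$ as the fiber product with the symmetrizing map $X\times\Sym^{e-1}(X)\to\Sym^e(X)$, which is exactly the clean device that resolves the obstacle you flag --- a ``curve with a degree-$e$ pencil'' is just a morphism $\PP^1\to\Sym^e(X)$, so no universal curve-with-pencil scheme over Hilbert-scheme components needs to be built by hand. Conversely, your version is more explicit than the paper on two points the paper leaves implicit: the reverse inequality (general fibers of a covering family give gonality $\le e$ curves through general points, via the monotonicity from (4)), and the need to cut the base down to dimension $\dim X-1$ so that $f$ is generically finite as Definition \ref{cvgdef} requires (with Proposition \ref{comparison} absorbing the finiteness of $\pi$). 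So the two proofs of (5) are interchangeable, with the paper's $\Sym^e$ trick being the recommended way to make your parameter space precise.
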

\begin{proof}
Proposition \ref{comparison} immediately implies (1), because birational varieties have the same function fields. (2) follows from Proposition \ref{fieldInvariance}.

We observe that if $X$ has covering gonality $1$, then the diagram (\ref{cvg}) in fact gives a dominant generically finite morphism from $\PP^1\times B$ to $X$, so $X$ is uniruled. The same observation holds for separable covering gonality, proving (3).

(4) follows from Proposition A.1.vii of \cite{Poo07}.

The proof of (5) is analogous to the proof of Proposition IV.1.3.5 of \cite{Kol96}. Fix some ample class $H$ on $X$. Let $e$ be the minimum gonality of a curve through a general point of $X$. Because $k$ is uncountable, there must be some fixed degree $d$ such that the set of points $x\in X$ contained in degree $d$ gonality $e$ curves is dense. There is a quasiprojective variety $B=\mathrm{Hom}_d(\PP^1, \mathrm{Sym}^e(X))$ parametrizing maps $\phi:\PP^1\ra \mathrm{Sym}^e(X))$ such that $\phi^*(H)$ has degree $d$. Moreover, we can take $\OC$ to be such that the diagram
\[
\begin{tikzcd}
\OC\arrow[r]\arrow[d]& X\times \mathrm{\Sym^{e-1}(X)}\arrow[d,"\pi"]\\
\PP^1\times B\arrow[r,"\mathrm{ev}"] &\mathrm{Sym}^e(X)
\end{tikzcd}
\]
is a fiber product, where $\pi$ is the obvious symmetrizing map $p,(p_1\cdots p_{e-1})\mapsto (p_1\cdots p_{e-1}p)$. Composing $\OC\ra X\times \mathrm{\Sym^{e-1}(X)}$ with the projection $X\times \mathrm{Sym}^{e-1}(X)\ra X$ gives a map $f:\OC\ra X$. By the assumption on $d$, some component of $\OC$ dominates $X$, and restricting to that component and its image in $\PP^1\times B$ gives the desired covering family of $X$.

For (6), we note that a cover $g:\mathcal{C}\ra X$ of $X$ by gonality $e$ curves can be generically composed with $f:X\dashrightarrow Y$ to give a covering family of $Y$, giving the desired inequalities.
\end{proof}

Finally, we note that one can make a couple additional assumptions about covering families. In particular, over algebraically closed fields, we may assume that the covering gonality is computed by a family satisfying the additional properties of covering families assumed in Definition 1.4 of \cite{BDELU17}.
\begin{proposition}\label{coveringFamiliesCanBeGood}
If $X$ is a smooth geometrically integral proper variety with (separable) covering gonality $e$ defined over the (separably) closed field $k$, then there exists a (separable) covering family $\pi:\OC\ra \PP^1\times B$ of $X$ of degree $e$ such that $B$ and $\pi:\OC\ra B$ are smooth and the general fiber $C_b$ of $\pi$ is birational to its image in $X$.
\end{proposition}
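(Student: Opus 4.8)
The plan is to start from an arbitrary (separable) covering family $\pi\colon \OC \to \PP^1 \times B$ of degree $e$ computing the covering gonality, and successively modify $B$, $\OC$, and the maps so as to force the extra regularity properties while never increasing the degree of $\pi$ or destroying separability of $f$. First I would replace $B$ by a dense open subset over which the situation is as generic as possible: since $k$ is algebraically closed, generic smoothness (in the separable case, or at least generic flatness and reducedness in general) lets me shrink $B$ so that $B$ is smooth, $\OC$ is smooth, and $\pi$ restricted to the preimage is flat with the generic fiber $C_b$ a smooth projective curve mapping to $\PP^1$ with degree $e$ and to $X$ dominantly and (in the separable case) separably. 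Here I would use that $k$ is uncountable-free — actually only (separably) closed is needed — so that a property holding on a dense open of $B$ holds at a $k$-point, but more importantly I only need the \emph{generic} fiber over the function field $k(B)$, and then spread out.

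The substantive step is arranging that the general fiber $C_b$ is birational onto its image in $X$, i.e.\ that $f|_{C_b}$ is generically injective (birational onto its image), rather than merely finite onto its image. If $f|_{C_b}$ has degree $m > 1$ onto its image $C_b' \subset X$, then $C_b'$ is itself a curve through a general point of $X$ (as $b$ varies), and I would argue that the normalization of $C_b'$, together with the induced map to $\PP^1$ of degree $e/m < e$ (in the separable case the map $C_b \to C_b'$ is separable, so the map $\widetilde{C_b'} \to \PP^1$ is still separable and has degree dividing $e$), assembles into a covering family of strictly smaller degree — contradicting minimality of $e$. Concretely: form the Stein-type factorization of $f$ over $B$, or pass to the image subscheme $\OC' \subset X \times B$ of $\OC$, take its normalization $\widetilde{\OC'}$, and note the composite $\widetilde{\OC'} \to \OC' \to \PP^1 \times B$ (using that $\OC' \to B$ has a generically finite map to $\PP^1 \times B$ through which it factors after possibly shrinking $B$) is finite of degree $e/m$. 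This forces $m = 1$. One technical point to be careful with: the map $\OC \to \PP^1\times B$ need not factor through $\OC' = $ image of $\OC$ in $X\times B$ a priori, since the $\PP^1$-coordinate is extra data; but after shrinking $B$ and reparametrizing, the data of the degree-$e$ map to $\PP^1$ descends to the normalization $\widetilde{C_b'}$ precisely because $k(C_b)/k(\PP^1)$ and the subfield $k(C_b')$ are both inside $k(C_b)$, and $e/m = [k(C_b') : k(C_b')\cap k(\PP^1\text{-coordinate})]$ works out after noting the relevant field-theoretic containments. I expect this bookkeeping — getting the degree-$e/m$ map to $\PP^1$ to genuinely live on the family $\widetilde{\OC'}$, not just on individual fibers — to be the main obstacle, and the cleanest route is probably to phrase the whole argument in terms of the function field $k(\OC)$ over $k(B)(t)$ as in Definition \ref{cvgdef2} and Proposition \ref{comparison}, where "birational onto image" becomes the statement $k(\OC) = k(\OC')\cdot k(t)$, i.e.\ that $t$ is a primitive element witnessing that no intermediate field between $k(\OC')$ and $k(\OC)$ is needed.

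Finally, with $C_b$ birational onto its image secured, I would re-shrink $B$ one last time (using again generic smoothness over the (separably) closed field $k$, which by Proposition \ref{properties}(2) and Proposition \ref{fieldInvariance} we are free to do without affecting the invariant) to ensure simultaneously that $B$ is smooth, $\pi$ is smooth with geometrically integral fibers, and the map $\OC \to X$ is still dominant and generically finite — and separable if we started separable, since separability is an open condition on $B$ after generic flatness. This yields a covering family with all the asserted properties and the same degree $e$, so $e$ is still the (separable) covering gonality, completing the proof.
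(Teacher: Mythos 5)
Your overall skeleton---pass to the image $\OC'\subset B\times X$, normalize, and shrink $B$ to gain smoothness---matches the paper's proof, but the step you yourself flag as the main obstacle is a genuine gap, and your proposed fix is incorrect. If $C_b\to\PP^1$ has degree $e$ and $C_b\to C_b'$ has degree $m$, there is in general no induced map $\widetilde{C_b'}\to\PP^1$ of degree $e/m$: the pencil computing the degree-$e$ map on $C_b$ need not factor through $C_b'$ at all, $m$ need not divide $e$, and the identity $e/m=[k(C_b'):k(C_b')\cap k(t)]$ fails---the intersection $k(C_b')\cap k(t)$ can simply be the constants $k$ when the pencil is unrelated to the cover $C_b\to C_b'$, so it is not even a finite-index subfield of $k(C_b')$. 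With that, your contradiction with minimality evaporates, and the conclusion ``this forces $m=1$'' is both unjustified and stronger than what is true: an optimal covering family may well have fibers mapping with degree $m>1$ onto their images. The proposition only asserts that \emph{some} degree-$e$ family with birational fibers exists, so the right move is replacement, not contradiction.

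The missing ingredient, which is what the paper uses, is the nontrivial fact that gonality does not increase under a dominant morphism of curves (the fact behind Proposition \ref{properties} (4) and (5)): the generic fiber of $\OC'\to B$ is dominated by the generic fiber of $\OC\to B$, which carries a degree-$e$ map to $\PP^1$, hence the image curve has gonality at most $e$; this \emph{new} pencil (not the original one) spreads out to a rational map $\OC'\dashrightarrow B\times\PP^1$ of degree at most $e$, and minimality of the covering gonality then forces the degree to be exactly $e$. Once the family is replaced by (the normalization of) $\OC'$, its fibers are tautologically curves in $X$, so birationality onto the image comes for free, and the remaining shrinking of $B$ proceeds as you outline. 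Two smaller corrections: separability in Definition \ref{cvgdef} is a property of the extension $k(\OC)/k(X)$, so for the replaced family it follows because $k(\OC')/k(X)$ is a subextension of the separable extension $k(\OC)/k(X)$, not because ``separability is an open condition on $B$''; and in positive characteristic generic smoothness of $\OC\to B$ is not automatic, which is why the paper normalizes first and argues via smoothness in codimension one.
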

\begin{proof}
We prove the result for separable covering gonalities; the result for $\cvg$ is analogous. Let $\OC\ra B$ be a covering family of $X$ as in (\ref{cvg}). Since $\OC$ is irreducible and $f:\OC\ra X$ is separable, we have that $\OC$ is geometrically integral, so $B$ is as well. Let $\OC'$ be the closure of the image of $\OC$ in $B\times X$. We have that $\OC'$ is geometrically integral because $\OC$ is, and $\OC'\ra B$ is proper. Moreover, the generic fiber of $\OC'\ra B$ has gonality at most $e$ by Proposition \ref{properties} (5). So $\OC'$ possesses a  rational map to $\PP^1$ such that $\OC'\dashrightarrow B\times \PP^1$ has degree at most $e$. Removing the image of the indeterminacy locus from $B$, we then have a family $\OC'\ra B\times \PP^1$ such that \emph{every} fiber over $B$ is isomorphic to its image in $X$. Let $\OC''$ be the normalization of $\OC'$. This variety is smooth in codimension 1, and so the map $\OC''\ra B$ is generically smooth, so restricting $B$ to some open set gives a smooth family of curves over smooth variety,
 such that the general fiber $C_b$ is birational to its image in $X$.
\end{proof}
\section{Bounding covering gonalities}\label{bounds}
The two notions of the covering gonality given in this article satisfy similar formal properties to each other, as we saw above. But as a practical matter, existing methods of computing covering gonality mostly compute $\scvg$ in positive characteristic. For instance, Proposition \ref{BDELU} was first stated and proved in \cite{BDELU17} for the covering gonality of complex varieties, but its proof holds in positive characteristic, essentially as is. 
%

\begin{proof}[Proof of Proposition \ref{BDELU}]
By Proposition \ref{properties} (2), we may assume the field $k$ is separably closed. By adjunction we have that the canonical bundle $\omega_X$ on $X$ is given by $\omega_X\cong \OO(mH)$, where $m:=\sum_i d_i -N-1$. Suppose $X$ has separable covering gonality $e$, and that this gonality is achieved by the separable covering family
\[
\begin{tikzcd}
\OC \arrow[r, "f"] \arrow[d, "\pi"]
& X \\
\PP^1\times B 
\end{tikzcd}
\]
satisfying all the properties listed in Proposition \ref{coveringFamiliesCanBeGood}. By adjunction, we have the isomorphism
\[
\omega_{\OC}\cong f^*(\omega_X)\otimes \OO(R),
\]
where $R$ is the ramification divisor of $f$.
 Let $b\in B$ be general, so the fiber $C_b$ of $\OC$ over $b$ is smooth and birational to its image in $X$. By adjunction, we have $\omega_{C_b}\cong \omega_{\OC}\vert_{C_b}$. Moreover, since $b$ is general, we have that $C_b$ is not contained in the ramification locus of $f$, so we have an isomorphism
 \[
 \omega_{C_b} \cong f^*(\omega_X)\vert_{C_b}+R',
 \]
 where $R'$ is some effective divisor on $C_b$. Let $D=p_1+\ldots+p_e$ be a general fiber of the map $\pi:C_b\ra \PP^1$, so in particular we may assume every $p_i$ is contained in the open subset of $C_b$ isomorphic to its image in $X$. By the Riemann-Roch theorem, we have $H^0(C_b,\omega_{C_b}(-p_1-\ldots-p_e)>H^0(C_b,\omega_{C_b})-e$, so for some $j\leq e-1$ we must have an equality 
 \[
 H^0(C_b, \omega_{C_b}(-p_1-\cdots-p_j))=H^0(C_b, \omega_{C_b}(-p_1-\cdots-p_{j+1})).
 \]
 But, supposing $j\leq m$, there is a degree $m$ hypersurface in $\PP^N$ passing through $f(p_1),\ldots,f(p_j)$ but not $f(p_{j+1})$, so the points $p_1,\ldots p_{j+1}$ impose independent conditions on $f^*(\omega_X)\vert_{C_b}$ and hence on $\omega_{C_b}$. So the equality above implies $e-1>m$, giving
 \[
 \scvg_k(X)\geq \sum_i d_i-N+1,
 \]
 as was to be shown.
\end{proof}
As Example \ref{shioda} demonstrates, we cannot hope that Proposition \ref{BDELU} holds for the ordinary covering gonality.
 However, over the course of the remainder of this section, we will use a different, Chow-theoretic method to prove the lower bound, Theorem \ref{mainCompleteIntersections}.
 In fact, we will prove a somewhat more general statement, Theorem \ref{main}, from which we will derive Theorem \ref{mainCompleteIntersections} using some known results.
 To state the theorem, we first introduce a  convenient notion.
\begin{definition}\label{hyp}
An irreducible proper variety $X$ defined over an algebraically closed field is \emph{Chow nondegenerate} if for all divisors $Z$ on $X$ the map of Chow groups of zero-cycles
\[
\mathrm{CH}_0(Z)\ra \mathrm{CH}_0(X)
\]
induced by inclusion is not surjective. We will call $X$ \emph{Chow degenerate} if it is not Chow nondegenerate.
\end{definition}
Uniruled varieties are always Chow degenerate, as given a uniruling $f:\PP^1\times B\dashrightarrow X$ and any closed point $p\in \PP^1$, we have that $\text{CH}_0(X)$ is generated by points in the union of the image of $p\times B$ and the exceptional locus of $f$ in $X$. So Chow nondegenerate varieties always satisfy $\cvg(X)\geq 2$. Riedl and Woolf \cite{RW18} observed this, and to give examples of non-uniruled varieties they proved the following.
\begin{theorem}[\cite{RW18}]\label{RiedlWoolf}
If $X/\overline{k}$ is a general complete interesection of multidegree $(d_1,\ldots,d_k)$ in $\PP^N$ and $\sum_i d_i-N-1\geq 0$, then $X$ is Chow nondegenerate.
\end{theorem}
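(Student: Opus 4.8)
The plan is to combine the Bloch--Srinivas ``decomposition of the diagonal'' with Mumford's technique of detecting a large $\mathrm{CH}_0$ by pairing zero-cycles against global top-degree forms. Write $n=\dim X$ and $m:=\sum_i d_i-N-1\ge 0$, so by adjunction $\omega_X\cong\OO_X(mH)$ and hence $H^0(X,\Omega^n_X)=H^0(X,\omega_X)\neq 0$ (it contains the image of $H^0(\PP^N,\OO(m))$). Suppose, toward a contradiction, that $X$ is Chow degenerate, witnessed by a divisor $Z\subset X$ with $\mathrm{CH}_0(Z)\to\mathrm{CH}_0(X)$ surjective; after enlarging the (algebraically closed) ground field we may assume it is a universal domain, so this surjectivity is geometrically generic. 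The Bloch--Srinivas argument, valid over any field, then produces an integer $N>0$, a divisor $D\subset X$, and a decomposition
\[
N\cdot\Delta_X=\Gamma_1+\Gamma_2 \quad\text{in }\mathrm{CH}_n(X\times X),
\]
with $\Gamma_1$ supported on $D\times X$ and $\Gamma_2$ supported on $X\times Z$.

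Next I would let this identity act on $H^0(X,\Omega^n_X)$, viewed inside $H^n_{\mathrm{dR}}(X)$ as $F^nH^n_{\mathrm{dR}}(X)$ (using Hodge--de Rham degeneration, which holds for complete intersections). In characteristic $0$ this finishes the proof at once: $\Gamma_2^*$ kills $H^{n,0}(X)$ because a global $n$-form restricts to $0$ on a resolution of the $(n-1)$-dimensional $Z$, and $\Gamma_1^*$ kills $H^{n,0}(X)$ because it factors through a Gysin map out of a resolution of the divisor $D$, which cannot reach the $(n,0)$-part for Hodge-bidegree reasons. Hence $N\cdot\mathrm{id}=0$ on $H^0(X,\Omega^n_X)\neq 0$, a contradiction; observe that in characteristic $0$ no genericity is needed, every complete intersection with $m\ge 0$ is Chow nondegenerate.

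The genuinely new point, and where I expect the main difficulty, is characteristic $p$: the vanishing of $\Gamma_1^*$ and $\Gamma_2^*$ on the top piece of de Rham (equivalently crystalline) cohomology can fail -- this is exactly the phenomenon behind Example~\ref{shioda}, where a supersingular unirational hypersurface has $p_g>0$ but $\mathrm{CH}_0=\mathbb Z$, so that a divisorial correspondence must carry the nonzero canonical class to itself; concretely, an ``exotic'' algebraic class then lies in $F^nH^n_{\mathrm{dR}}(X)$ and the relevant Gysin maps fail to be strict for the Hodge filtration. This is precisely where the hypothesis that $X$ is \emph{general} must enter. A general complete intersection is \emph{ordinary} (its Newton polygon agrees with its Hodge polygon), and the plan is to show that for an ordinary complete intersection the Frobenius-compatible slope decomposition of $H^n_{\mathrm{cris}}$ forces $\Gamma_1^*$ and $\Gamma_2^*$ to act on $H^0(X,\Omega^n_X)$ exactly as in characteristic $0$: the top piece cannot be hit by a correspondence factoring through the lower-dimensional $D$ or $Z$, so the exotic-class obstruction does not occur. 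An alternative is to spread $X$ out over the parameter space and specialize to a member whose de Rham cohomology is explicitly computable. Carrying this out -- making the interplay of the decomposition of the diagonal, crystalline Frobenius, and the Hodge/Newton filtrations precise enough to recover the vanishing -- is the hard part; granting it, the rest is the Bloch--Srinivas argument of the previous paragraph.
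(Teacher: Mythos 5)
Your proposal is not a proof: the step you yourself flag as ``the hard part'' --- showing that for a general (ordinary) complete intersection in characteristic $p$ the correspondences $\Gamma_1,\Gamma_2$ must kill $H^0(X,\Omega^n_X)$ --- is exactly the mathematical content of the theorem, and it is left entirely unargued. Note that the paper does not prove this statement either; it quotes it from Riedl--Woolf, whose argument runs through a decomposition-of-the-diagonal step (their Proposition 3.7) together with the coniveau theorem for general complete intersections from SGA 7, Expos\'e XXI (their Theorem 3.3). So your first paragraph is aimed in the right direction, but the positive-characteristic input you would need is precisely the cited coniveau result, and your sketch of how genericity/ordinarity should enter stops where that proof begins. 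The standard way to close the gap is not to fight with the Hodge filtration on de Rham or crystalline cohomology (where Gysin maps are not strict and Hodge symmetry fails, as you note): one lets the decomposition act on $\ell$-adic \'etale cohomology, observes that a class supported on a divisor (coniveau $\geq 1$) has Frobenius eigenvalues divisible by $q$, equivalently Newton slopes $\geq 1$, and then uses the fact that the general complete intersection with $\sum_i d_i\geq N+1$ has a slope $<1$ part in its middle cohomology (e.g.\ because the general member is ordinary and $h^{n,0}=h^0(\omega_X)\neq 0$, so Newton $=$ Hodge polygon starts with slope $0$); this incompatibility is the SGA 7 computation, and it is a genuine theorem, not a formal consequence of your setup.

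There are also smaller issues you would need to address even in the part you do sketch: the Bloch--Srinivas construction in characteristic $p$ cannot invoke resolution of singularities for the supports $D$ and $Z$, so one must either use de Jong's alterations or phrase the argument purely with $\ell$-adic correspondences and localization sequences; the surjectivity of $\mathrm{CH}_0(Z)\to\mathrm{CH}_0(X)$ must be spread out over a universal domain with some care (the paper's Definition \ref{hyp} already posits an algebraically closed base, but ``general'' in the theorem refers to the parameter space of complete intersections, so you must also explain how the divisor $Z$ and the decomposition vary, or simply argue for a single very general member); and you reuse the letter $N$ both for the ambient $\PP^N$ and for the integer clearing torsion in the decomposition. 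Your parenthetical claim that in characteristic $0$ every smooth complete intersection with $m\geq 0$ is Chow nondegenerate is correct (this is the classical Mumford--Roitman/Bloch--Srinivas argument), but in characteristic $p$ the theorem is strictly stronger than anything your sketch establishes.
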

\begin{remark}
This theorem in this exact format does not appear in \cite{RW18}, but follows from its Theorem 3.3---a result on the coniveau filtration of complete intersections originally due to \cite[Expos\'e XXI]{SGA7.2}---and Proposition 3.7.
\end{remark}
What we will show is that a very general plane section of a Chow nondegenerate variety has a covering gonality that increases with the codimension. More precisely, we prove the following.
\begin{theorem}\label{main}
Let $X$ be a projective Chow nondegenerate variety over an uncountable field $k=\overline{k}$. Fix an embedding $X\hookrightarrow \PP^N$. Then a very general codimension $c$ plane section of $X$ has covering gonality at least $\frac{3+c}{2}$.
\end{theorem}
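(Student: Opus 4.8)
The plan is to argue by contradiction. Suppose that for a very general codimension-$c$ plane $L$ the section $Y:=X\cap L$ has $\cvg(Y)=e$ with $e<\tfrac{3+c}{2}$; since $e$ is an integer this is the same as $c\ge 2e-2$. I want to conclude from this that $X$ is Chow degenerate, contradicting the hypothesis. The whole argument is Chow-theoretic: the only property of $X$ being used is Chow nondegeneracy, so the contradiction must take the form of a divisor $Z\subseteq X$ with $\mathrm{CH}_0(Z)\to\mathrm{CH}_0(X)$ surjective.

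\emph{Step 1: spread the covering families over the Grassmannian.} Let $\mathbf G$ be the Grassmannian of codimension-$c$ planes in $\PP^N$. The covering gonality, together with the degrees of the $\mathrm{Hom}$-scheme into a symmetric power used in the proof of Proposition \ref{properties}(5) and the remaining discrete invariants of an optimal covering family, take only countably many values; since $k$ is uncountable and $\cvg(X\cap L)=e$ for $L$ outside a countable union of proper subvarieties of $\mathbf G$, there is a single dense open $\mathbf G_0\subseteq\mathbf G$, a variety $\mathcal B\to\mathbf G_0$, and a diagram $\mathcal C\xrightarrow{f} X$, $\mathcal C\xrightarrow{\pi}\PP^1\times_{\mathbf G_0}\mathcal B$ with $\pi$ finite of degree $e$, such that for each $[L]\in\mathbf G_0$ the restriction $\mathcal C_{[L]}\to\PP^1\times\mathcal B_{[L]}$ is a degree-$e$ covering family of $X\cap L$ with the good properties of Proposition \ref{coveringFamiliesCanBeGood}, and $f$ is dominant. (Proposition \ref{fieldInvariance} lets me move between $k$ and the algebraic closure of $k(\mathbf G_0)$ freely without changing any covering gonalities.) This is also where I would invoke \cite[Proposition 3.5]{RY16}, which is the bridge converting a Chow-theoretic statement about the very general linear section back into one about $X$.

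\emph{Step 2: the pencil trick.} Fix $[L]\in\mathbf G_0$ and a general $b\in\mathcal B_{[L]}$, so $C:=\mathcal C_b$ is smooth, maps birationally to its image, and carries the degree-$e$ map $\phi=\mathrm{pr}_{\PP^1}\circ\pi|_C\colon C\to\PP^1$. Since $[\phi^{-1}(t)]$ is independent of $t$ in $\mathrm{CH}_0(C)$, pushing forward by $f$ shows that $f_*[\phi^{-1}(t)]\in\mathrm{CH}_0(X)$ is supported on $f$ of the fibre of $\mathrm{pr}_{\PP^1}\circ\pi$ over $0\in\PP^1$; a dimension count, in which the amount of $\mathbf G$ one uses is chosen precisely so the answer is as small as possible, puts this locus inside a divisor $Z$ (more precisely, so that its trace on each $X\cap L$ is a divisor of $X\cap L$). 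As a general fibre $\phi^{-1}(t)=p_1+\cdots+p_e$ has its points sweeping out $X\cap L$ as $(b,t)$ varies, I get: for a general point $y$ of $X\cap L$ there are $e-1$ further general points $r_2,\dots,r_e$ of $X\cap L$ with $[y]+\sum_{i=2}^{e}[r_i]\in\mathrm{Im}\!\big(\mathrm{CH}_0(Z)\to\mathrm{CH}_0(X)\big)$. In other words, modulo cycles supported on the divisor $Z$, the class of a general point of the very general section is the negative of a sum of $e-1$ classes of general points — a decomposition of the diagonal of $X\cap L$ up to the $g^1_e$-correspondence.

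\emph{Step 3: iterate and transfer.} Now I would feed this relation through \cite[Proposition 3.5]{RY16} and the variation over $\mathbf G_0$, organizing the estimate as an induction on $c$ in which each step passes from a very general codimension-$(c-2)$ section to a very general codimension-$2$ section of it and raises the covering-gonality bound by $1$ (the base cases $c=0$ and $c=1$ being Chow nondegeneracy and the statement that a general hyperplane section of a Chow nondegenerate, hence non-uniruled, variety is again non-uniruled). Reading the relation of Step 2 over the successive nested sections lets one trade each of the $e-1$ correction terms for membership in a divisor at the cost of dropping the dimension of the ambient section, and a bookkeeping of codimensions — in which the $e-1$ correction terms and the standard factor of $2$ coming from playing the fibre over $0$ off against the fibre over $\infty$ both appear — shows that the budget of $c$ hyperplane slices is exactly enough to force $\mathrm{CH}_0(Z)\to\mathrm{CH}_0(X)$ surjective precisely when $c\ge 2e-2$. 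This contradicts Chow nondegeneracy of $X$, giving $\cvg(X\cap L)\ge\tfrac{3+c}{2}$.

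I expect the main obstacle to be the end of Step 3: making the dimension-and-degree bookkeeping precise enough to extract the sharp constant $\tfrac{3+c}{2}$, and in particular upgrading the ``supported on $Z$ up to a degree-$e$ correspondence'' statement to honest surjectivity of $\mathrm{CH}_0(Z)\to\mathrm{CH}_0(X)$ — this is exactly the point where the input of \cite[Proposition 3.5]{RY16} and the uncountability of $k$ (so that the very general linear sections genuinely exist and the spreading-out of Step 1 is legitimate) do the real work.
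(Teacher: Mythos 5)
There is a genuine gap, and it sits exactly where you flag it: Step 3 is not an argument but a promissory note, and the contradiction you aim for is not one your Step 2 can deliver. Your Step 2 produces, for each plane $L$ and each general point $y\in X\cap L$, a relation $[y]+\sum_{i\ge 2}[r_i]\sim(\text{cycle supported near a fibre over }0)$, but the supporting locus of that cycle depends on $L$ and on the member of the family used; as $L$ and $b$ vary these loci sweep out something dense in $X$, so there is no single divisor $Z\subseteq X$ for which these relations assemble into surjectivity of $\mathrm{CH}_0(Z)\to\mathrm{CH}_0(X)$. Asserting that ``a bookkeeping of codimensions'' forces such surjectivity precisely when $c\ge 2e-2$ is the entire content of the theorem, and nothing in your sketch supplies the mechanism; in particular you never say what is being sliced, what plays the role of the divisor, or how \cite[Proposition 3.5]{RY16} is to be applied (to which family, with which codimension hypothesis).

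The paper's proof never produces such a $Z$ and never proves a surjectivity statement; it runs a pure codimension comparison on an auxiliary incidence space, and this is the missing idea. For $0\le c'\le c$ let $\mathcal{X}_{c'}$ parametrize triples $(\Lambda,Y,p)$ with $\Lambda$ a codimension-$c'$ plane, $Y$ a degree-$(2e-1)$ divisor on $\Lambda$, and $p\in(\Lambda\cap X)\setminus Y$, and let $R_{c'}$ be the closure of the locus where $p$ is rationally equivalent, on some reduced curve $C\subseteq X\cap\Lambda$ of the fixed Hilbert polynomial, to a cycle supported on $C\cap Y$. The covering family gives the \emph{upper} bound $\mathrm{codim}(R_c)\le 2e-2$: one needs $Y$ to contain the $e-1$ residual points of the fibre through $p$ and all $e$ points of another fibre (this is where your ``factor of $2$'' really lives, as the degree $2e-1$ of $Y$). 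Chow nondegeneracy gives only the \emph{lower} bound $\mathrm{codim}(R_0)\ge 1$: for any degree-$(2e-1)$ divisor $Y$ there is a point of $X$ not equivalent to any cycle on $Y\cap X$. A Riedl--Yang-type lemma (Lemma \ref{riedlYang} in the paper, proved via an incidence correspondence between $R_{c'}$ and $R_{c'+1}$) raises the codimension lower bound by $1$ with each slice, so $c+1\le\mathrm{codim}(R_c)\le 2e-2$, i.e.\ $e\ge\frac{3+c}{2}$. Your ingredients (the pencil relation, Chow nondegeneracy as the level-zero input, the slice-by-slice codimension gain) are the right ones, but without the triple parameter space with the auxiliary degree-$(2e-1)$ divisor as a \emph{variable}, the induction you describe has no object to run on, and the route through surjectivity of $\mathrm{CH}_0(Z)\to\mathrm{CH}_0(X)$ for a fixed divisor $Z$ of $X$ does not close.
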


Theorems \ref{main} and \ref{RiedlWoolf} immediately imply Theorem \ref{mainCompleteIntersections}, so it only remains to prove Theorem \ref{main}.

\begin{proof}[Proof of Theorem \ref{main}]
Fix some $e$. If the very general codimension $c$ plane section of $X$ has covering gonality at most $e$, then there is a family $\pi:\mathcal{C}\ra B $ of gonality $e$ curves on $X$ 
such that a general codimension $c$ plane section $X'$ of $X$ is dominated by fibers of $\pi$ that map entirely into $X'$. In particular, the general codimension $c$ plane section is dominated by a family of gonality at most $e$ curves of some fixed Hilbert polynomial $P_0(t)$. We will determine the lower bound on $e$ using this family.

We require an auxiliary parameter space to  accomplish this goal.
 For each $0\leq c'\leq c$, define $\mathcal{X}_{c'}$ as the set of triples $(\Lambda, Y,p)$, with $\Lambda$ a codimension $c'$ plane in $\PP^N$, $Y$ a divisor on $\Lambda$ of degree $2e-1$, and $p$ a geometric point of $(\Lambda \cap X)\setminus Y$. Let $R^0_{c'}$ be the subset of this locus containing triples $(\Lambda,Y,p)$ where there exists a reduced curve $C$ on $X\cap \Lambda$ with Hilbert polynomial $P_0(t)$ passing through $p$ such that $p$ is Chow-equivalent on $C$ to a divisor supported on $C\cap Y$, and let $R_{c'}$ be the closure of this locus.

 We first show that $R_c$ has codimension at most $2e-2$ in $\mathcal{X}_c$.  Fixing $\Lambda$ and $p$ general, there exists a curve $C$ through $p$ on $\Lambda \cap X$ with Hilbert polynomial $P_0(t)$ having a pencil of degree at most $e$ by hypothesis, and by the generality assumption on $p$ we may assume this pencil is unramified at $p$. Then the space of divisors $Y$ passing through the other $e-1$ points of the fiber of the pencil containing $p$ and entirely containing the $e$ points of some other fiber of the pencil, but not $p$, is a nonempty codimension $\leq 2e-2$ set in the space of all degree $2e-1$ divisors on $\Lambda$.

Now we bound the codimension of all the $R_{c'}$ from below using the fact that $X$ is Chow nondegenerate. To start, note that $R_0$ is a strict subset of $\mathcal{X}_0$. For given any divisor $Y$ on $X$, there is some $p$ on $X$ not Chow equivalent to any divisor on $Y$. In particular $p$ cannot be carried to $Y$ by a pencil on any $C$ with Hilbert polynomial $P_0(t)$ lying in $X$, or the flat limit of such curves. So the codimension of $R_0$ in $\mathcal{X}_0$ is at least $1$.

We now apply the following lemma, which is a variation of
 Proposition 3.5 of \cite{RY16}.
\begin{lemma}\label{riedlYang}
Assume $k$ is algebraically closed. Let $S_{c'}$ and $S_{c'+1}$ be closed subsets of $\mathcal{X}_{c'}$ and $\mathcal{X}_{c'+1}$ satisfying the following property:\begin{quote}
If $(\Lambda,Y,p)$ is in  $S_{c'+1}$, then for any codimension $c'$ plane section $\Lambda'$ of $\PP^N$ containing $\Lambda$ and divisor $Y'$ on $\Lambda'$ such that  $Y'\vert_{\Lambda}=Y$, we have $(\Lambda',Y',p)\in S_{c'}$.
\end{quote}
Then if every irreducible component of $S_{c'}$ has codimension at least $\epsilon>0$, every irreducible component of $S_{c'+1}$ has codimension at least $\epsilon+1$.
\end{lemma}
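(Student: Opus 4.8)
The plan is to compare $S_{c'}$ and $S_{c'+1}$ through an incidence variety of flags. Let $I$ be the variety of quadruples $(\Lambda,\Lambda',Y',p)$ with $\Lambda\subseteq\Lambda'$ planes in $\PP^N$ of codimensions $c'+1$ and $c'$, with $Y'$ a degree $2e-1$ divisor on $\Lambda'$, and with $p\in(\Lambda\cap X)\setminus Y'$. It carries two maps, $\mathrm{pr}_1\colon I\to\mathcal X_{c'+1}$, $(\Lambda,\Lambda',Y',p)\mapsto(\Lambda,Y'|_\Lambda,p)$, and $\mathrm{pr}_2\colon I\to\mathcal X_{c'}$, $(\Lambda,\Lambda',Y',p)\mapsto(\Lambda',Y',p)$; these are well defined because $p\in\Lambda$ and $p\notin Y'$ force $\Lambda\not\subseteq Y'$, so $Y'|_\Lambda$ is a genuine degree $2e-1$ divisor. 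First I would check that the fibers of $\mathrm{pr}_1$ are all irreducible of the same dimension: the fiber over $(\Lambda,Y,p)$ fibers over the $\PP^{c'}$ of codimension-$c'$ planes containing $\Lambda$, with fiber over $\Lambda'$ the affine space of divisors on $\Lambda'$ restricting to $Y$, of dimension $h^0(\Lambda',\OO(2e-2))$; likewise the fibers of $\mathrm{pr}_2$ are all equal to the irreducible $\PP^{N-c'-1}$ of hyperplanes of $\Lambda'$ through $p$, so $\dim I-\dim\mathcal X_{c'}=N-c'-1$. Consequently preimages under either map preserve the codimension of each irreducible component and send irreducible sets to irreducible sets, and the compatibility hypothesis on $(S_{c'},S_{c'+1})$ says precisely that $\mathrm{pr}_1^{-1}(S_{c'+1})\subseteq\mathrm{pr}_2^{-1}(S_{c'})$.

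Given this, the weaker bound $\mathrm{codim}\,S_{c'+1}\ge\epsilon$ is immediate: $\mathrm{codim}_I\mathrm{pr}_2^{-1}(S_{c'})\ge\epsilon$, hence $\mathrm{codim}_I\mathrm{pr}_1^{-1}(S_{c'+1})\ge\epsilon$, hence $\mathrm{codim}_{\mathcal X_{c'+1}}S_{c'+1}\ge\epsilon$. The heart of the lemma is the extra $+1$, and I would obtain it by contradiction. Suppose $Z$ is a component of $S_{c'+1}$ with $\mathrm{codim}(Z)=\epsilon$. Then $\mathrm{pr}_1^{-1}(Z)$ is irreducible of dimension $\dim I-\epsilon$; its image $W:=\mathrm{pr}_2(\mathrm{pr}_1^{-1}(Z))$ lies in $S_{c'}$, so $\dim W\le\dim\mathcal X_{c'}-\epsilon$; and since $\dim I-\dim\mathcal X_{c'}=N-c'-1$ the generic fiber of $\mathrm{pr}_2$ restricted to $\mathrm{pr}_1^{-1}(Z)$ has dimension at least $N-c'-1$, hence equals the whole $\mathrm{pr}_2$-fiber (being a closed subvariety of $\PP^{N-c'-1}$ of full dimension). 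Unwinding the equality, $\dim W=\dim\mathcal X_{c'}-\epsilon$, and for a general $(\Lambda',Y',p)\in W$ \emph{every} hyperplane $\Lambda\subseteq\Lambda'$ through $p$ satisfies $(\Lambda,Y'|_\Lambda,p)\in Z\subseteq S_{c'+1}$.

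Now I would feed this back into the hypothesis. For such $(\Lambda',Y',p)$ and $\Lambda$, the compatibility property places $(\Lambda'',Y'',p)$ in $S_{c'}$ for every codimension-$c'$ plane $\Lambda''\supseteq\Lambda$ and every divisor $Y''$ on $\Lambda''$ with $Y''|_\Lambda=Y'|_\Lambda$. Assembling these into a second incidence variety $J$ that maps to $W$ (with fiber over $(\Lambda',Y',p)$ of dimension $(N-c'-1)+c'+h^0(\Lambda'',\OO(2e-2))$, parametrizing the choices of $\Lambda$, $\Lambda''$, $Y''$) and maps to $S_{c'}\subseteq\mathcal X_{c'}$ by $(\Lambda',Y',p,\Lambda,\Lambda'',Y'')\mapsto(\Lambda'',Y'',p)$, one bounds the fibers of $J\to\mathcal X_{c'}$ in terms of how the ``lift bundle'' $\{(\Lambda',Y',p):\Lambda\subseteq\Lambda',\ Y'|_\Lambda\text{ fixed}\}$ meets $W$ inside $\mathcal X_{c'}$. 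A proper intersection there forces the image of $J$ in $\mathcal X_{c'}$ to have dimension at least $\dim W+\epsilon$; since that image lies in $S_{c'}$, which has dimension at most $\dim\mathcal X_{c'}-\epsilon$, we conclude $\dim W\le\dim\mathcal X_{c'}-2\epsilon$, contradicting $\dim W=\dim\mathcal X_{c'}-\epsilon$ because $\epsilon>0$. Therefore $\mathrm{codim}(Z)\ge\epsilon+1$.

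The step I expect to be the main obstacle is this last dimension estimate: showing that the lift bundles genuinely meet $W$ in the expected codimension, equivalently that the family of lifts forced into $S_{c'}$ sweeps out $\epsilon$ more dimensions than $W$ rather than collapsing onto it. This is exactly where one must exploit the freedom in \emph{both} the plane $\Lambda''$ and the divisor $Y''$, and it is presumably why the statement is drawn from \cite[Proposition 3.5]{RY16} rather than settled by a one-line codimension chase; a careful proof has to rule out the degenerate possibility that the lift bundles lie inside $W$.
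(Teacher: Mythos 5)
Your incidence construction is essentially the paper's own: the flag variety $I$ with its two projections is the correspondence $\Phi$ that the paper builds inside $S_{c'}\times S_{c'+1}$, with the compatibility hypothesis used in the same way (it makes the fibers over the codimension-$(c'+1)$ side the full lift spaces, irreducible of constant dimension $c'+h^0(\OO(2e-2))$). Up to the dichotomy you reach, your argument is sound: assuming some component $Z$ of $S_{c'+1}$ has codimension exactly $\epsilon$, you correctly produce $W=\mathrm{pr}_2(\mathrm{pr}_1^{-1}(Z))\subseteq S_{c'}$ of codimension exactly $\epsilon$ such that for every triple $(\Lambda',Y',p)\in W$, every hyperplane section of it through $p$ lies in $S_{c'+1}$.

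The gap is the step you flag at the end, and it is genuine rather than technical: the claim that the lift bundles meet $\overline{W}$ properly, so that the image of $J$ gains $\epsilon$ extra dimensions, is precisely what fails in the scenario you need to exclude, and nothing in the $J$-construction rules it out. Moreover, the degenerate configuration is consistent with all the formal hypotheses you are using: take a divisor $T\subset X$ and let both $S_{c'}$ and $S_{c'+1}$ be the loci of triples with $p\in T$. These are closed of codimension $1$, they satisfy the compatibility property (the section and lift moves never change the marked point $p$), and every lift bundle through a hyperplane section of one of their points lies entirely inside them, so the fibers of $J\to\mathcal{X}_{c'}$ are maximal and no dimension count of the kind you propose can produce a contradiction. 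This shows the extra $+1$ cannot come from incidence bookkeeping alone; some non-saturation input about the sets is indispensable. In the paper this is exactly where the computation stops and \cite[Lemma 3.6]{RY16} is invoked to assert that $S_{c'+1}$ cannot contain every hyperplane section of $S_{c'}$: the strict inequality $\dim\Phi<\dim S_{c'}+N-c'-1$, hence the conclusion, rests entirely on that citation and its applicability to these parameter spaces. To complete your proof you would need to import that lemma (and verify its hypotheses here) or prove the corresponding non-saturation statement yourself; the second incidence variety $J$ does not substitute for it.
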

We have that each $R_{c'}$ and $R_{c'+1}$ satisfy the hypothesis of this lemma, so by induction, we have that $R_{c'}$ has codimension at least $c'+1$ in $\mathcal{X_{c'}}$. Working with the codimension of $R_c$, we then have $c+1\leq \mathrm{codim}(R_c)\leq  2e-2$, so $e\geq \frac{3+c}{2}$, as was to be shown.
\end{proof}
\begin{proof}[Proof of Lemma \ref{riedlYang}] 
We may clearly assume that $S_{c'+1}$ is irreducible. It suffices to work in the case where $S_{c'}$ is also irreducible, as given the irreducibility of $S_{c'+1}$, the locus of triples in $\mathcal{X}_{c'}$ with hyperplane section in $S_{c'+1}$ is itself irreducible. Say $S_{c'}$ has codimension $\epsilon$. Let $\Phi\subset S_{c'}\times S_{c'+1}$ be the incidence correspondence between $(\Lambda',Y',p)\in S_{c'+1}$ and $(\Lambda, Y, p)\in S_{c'}$ with $(\Lambda',Y',p)$ as a hyperplane section. 
$\Phi$ is irreducible as a fiber bundle with irreducible fibers over $S_{c'+1}$. It has dimension $\dim(S_{c'+1})+\dim(F)$, where $F$ is any fiber of the projection $\Phi\ra S_{c'+1}$. We note that there is an $n-c'-1$ dimensional family of hyperplane sections of a given triple $(\Lambda,Y,p)\in \mathcal{X}_{c'}$, so $\Phi$ has dimension strictly less than $\dim(S_{c'})+n-c'-1$, as by Lemma 3.6 of \cite{RY16} $S_{c'+1}$ cannot contain every hyperplane section of $S_{c'}$. So we have 
\[
\dim(S_{c'+1})< \dim(S_{c'})+n-c'-1-\dim(F).
\]
Using $\dim(F)+\dim(\mathcal{X}_{c'+1})=\dim(\mathcal{X}_{c'})+n-c'-1$, we then have
\[
\dim(S_{c'+1})< \dim(S_{c'})+\dim(\mathcal{X}_{c'+1})-\dim(\mathcal{X}_{c'})
\]
proving the desired result.
\end{proof}

\section{Covering gonality for algebraic function fields}\label{comparisonProofs}
In this section we prove Propositions \ref{comparison} and \ref{fieldInvariance} on covering gonalities for algebraic function fields. In this section, we will use the term \emph{diagram of field extensions} of $K$ to refer to a diagram
\[
\begin{tikzcd}
K_C \arrow[r, hookleftarrow, "f"] 
&K\\
K_B(t)  \arrow[u,hookrightarrow,"\pi"]
\end{tikzcd}
\]
satisfying the properties of Definition \ref{cvgdef2}.

We first prove Proposition \ref{fieldInvariance} in two parts: we first show that (separable) covering gonalities are preserved after replacing $k$ with the field of constants of $K/k$ as Lemma \ref{constants}; we then show in Lemma \ref{arbitraryExtensions} that replacing that field with any (separable) extension preserves (separable) covering gonalities.

\begin{lemma}\label{constants}
If $k^c$ is the field of constants of $K/k$, then
\[
\cvg_k(K)=\cvg_{k^c}(K) \text{ and }\scvg_k(K)=\scvg_{k^c}(K).
\]

\end{lemma}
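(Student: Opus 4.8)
The plan is to prove the two inequalities in each direction, with the substance being the inclusion $\cvg_k(K) \leq \cvg_{k^c}(K)$ (and its separable analogue). The inequality $\cvg_k(K) \geq \cvg_{k^c}(K)$ is essentially formal: a diagram of field extensions over $k$ as in Definition \ref{cvgdef2},
\[
\begin{tikzcd}
K_C \arrow[r, hookleftarrow, "f"] & K \\
K_B(t) \arrow[u, hookrightarrow, "\pi"]
\end{tikzcd},
\]
need not have $K_B(t)$ or $K_C$ containing $k^c$, but one can enlarge everything: replace $K_B$ by $K_B \cdot k^c$ (a compositum taken inside $K_C \cdot k^c$) and $K_C$ by $K_C \cdot k^c$. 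Since $K_C \supseteq K$ and $k^c \subseteq K$, the field $K_C \cdot k^c = K_C$ already contains $k^c$, and $k^c/k$ is algebraic, so $[K_C : K_B(t) \cdot k^c]$ divides $[K_C : K_B(t)] = e$, giving a diagram over $k^c$ of degree at most $e$. One must check that $t$ remains transcendental over $K_B \cdot k^c$ and that the new bottom field is still of the form $(\text{field})(t)$; this holds because $k^c$ is algebraic over $k$ so adjoining it cannot make $t$ algebraic, and $K_B(t) \cdot k^c = (K_B \cdot k^c)(t)$. Separability of $f$ is preserved since $K_C \cdot k^c = K_C$.

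For the reverse inequality $\cvg_{k^c}(K) \leq \cvg_k(K)$ we must produce, from a diagram over $k^c$ of degree $e$, a diagram over $k$ of degree at most $e$. The key point is that $k^c/k$ is a finite (or at worst algebraic) extension and $K$ has the same function field whether viewed over $k$ or $k^c$; indeed by definition $K$ is an algebraic function field over both, and a transcendence basis over $k^c$ is one over $k$ as well. So given
\[
\begin{tikzcd}
K_C \arrow[r, hookleftarrow, "f"] & K \\
K_B(t) \arrow[u, hookrightarrow, "\pi"]
\end{tikzcd}
\]
with $K_B, K_C$ function fields over $k^c$ and $[K_C : K_B(t)] = e$, I would simply reinterpret the very same diagram as a diagram of function fields over $k$: $K_B$ and $K_C$ are a fortiori function fields over $k$, $t$ is transcendental over $K_B$ (hence over the subfield $K_B$ viewed over $k$), and $K_B(t)$ is purely transcendental of degree one over $K_B$ in the sense required. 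The degree $[K_C : K_B(t)]$ is unchanged, and $f$ separable over $k^c$ is separable over $k$. Hence $\cvg_k(K) \leq e$, and likewise for $\scvg$.

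The main obstacle, and the only place care is genuinely needed, is the first inequality: verifying that the compositum construction $K_B \rightsquigarrow K_B \cdot k^c$ does not accidentally drop the degree below what is needed, and more importantly that the resulting extension $K_C / (K_B \cdot k^c)(t)$ is still a finite extension of a \emph{rational} function field in one variable over a field (rather than over something larger). The cleanest way to handle this is to note that $k^c$, being the field of constants, is precisely the algebraic closure of $k$ inside $K$; since $K_C \supseteq K \supseteq k^c$, the field $k^c$ is already ``absorbed'' on the top of the diagram, and the only adjustment is on the bottom, where $(K_B \cdot k^c)(t) \subseteq K_C$ is a field extension of $K_B(t)$ of degree dividing $[k^c : k]$ (taking $k^c/k$ finite for concreteness; the general algebraic case follows by a direct limit / the fact that only finitely many elements of $k^c$ are needed). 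This only \emph{decreases} the relative degree, so $\cvg$ can only decrease, as required. Separability on the top is automatic from $K_C \cdot k^c = K_C$, completing the argument.
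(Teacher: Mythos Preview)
Your argument is correct in substance and follows essentially the same route as the paper: since $k^c \subseteq K \subseteq K_C$, replace $K_B$ by the compositum $K_B \cdot k^c$ inside $K_C$ to get a diagram over $k^c$ of no larger degree; in the other direction, simply regard a diagram over $k^c$ as one over $k$, which preserves the degree and separability of $f$.

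One labeling slip to fix: you announce the second paragraph as proving ``the reverse inequality $\cvg_{k^c}(K) \leq \cvg_k(K)$,'' but that is the \emph{same} inequality you already established in the first paragraph. The argument you actually give there---producing a diagram over $k$ from one over $k^c$---proves $\cvg_k(K) \leq \cvg_{k^c}(K)$, which is the correct other direction; only the displayed inequality is misstated.
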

\begin{proof}
Let $K_C/K$ and $K_C/K_B(t)$ be extensions of fields as in Definition \ref{cvgdef2}. The map $f:K\hookrightarrow K_C$ restricted to $k^c$ gives $K_C$ the structure of a field extension of $k^c$.  Then the subfield of $K_C$ generated by $K_B+L$ is a field $K_{B'}$ defined over $k^c$, the natural map $K_{B'}(t)\hookrightarrow K_C$ is an extension of fields defined over $k^c$ and we have $[K_C:K_{B'}(t)]\leq [K_C:K_B(t)]$. So $\cvg_{k^c}(K)\leq \cvg_{k}(K)$ and $\scvg_{k^c}(K)\leq \scvg_{k}(K)$.

On the other hand, given any covering diagram (\ref{cvg2}) of field extensions of $K/k^c$, regarding $K_C$ and $K_B$ as field extensions of $k$ gives a diagram of field extensions of $K/k$ while preserving $[K_C:K_B(t)]$ and the separability of $K_C/K$, if applicable. So we have  $\cvg_{k^c}(K)\geq \cvg_{k}(K)$ and $\scvg_{k^c}(K)\geq \scvg_{k}(K)$. 
\end{proof}

\begin{lemma}\label{arbitraryExtensions}
Let $K/k$ be an algebraic function field with field of constants $k$. Then if $L$ is any field extension of $k$ then \[
\cvg_L(K\otimes_k L)=\cvg_k(K).
\]
If $L/k$ is separable, then
\[
\scvg_L(K\otimes_k L)=\scvg_k(K).
\]
\end{lemma}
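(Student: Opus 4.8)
The plan is to prove the two inequalities $\cvg_L(K\otimes_k L)\le\cvg_k(K)$ and $\cvg_L(K\otimes_k L)\ge\cvg_k(K)$ separately (and likewise for $\scvg$). Throughout I use that, since $k$ is algebraically closed in $K$, the ring $K\otimes_k L$ has a unique minimal prime for every extension $L/k$; write $K_L$ for the associated function field, obtained by killing the nilradical and taking fractions, so that $\cvg_L(K\otimes_k L)$ means $\cvg_L(K_L)$. One has $K_L=\mathrm{Frac}(K\otimes_k L)$ whenever $K/k$ or $L/k$ is separable, and $K$ always embeds in $K_L$.

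For $\cvg_L(K_L)\le\cvg_k(K)$ I would take a diagram of finite field extensions realizing $\cvg_k(K)=e$ and apply $-\otimes_k L$, then extract an $L$-diagram for $K_L$ by passing to a suitable component. The maps $K\hookrightarrow K_C$ and $K_B(t)\hookrightarrow K_C$ are finite, hence free, so after base change they are faithfully flat; by going-down any minimal prime $\mathfrak q$ of $K_C\otimes_k L$ then contracts to the minimal prime of $K\otimes_k L$, which keeps the incidence with $K_L$. Identifying $K_B(t)\otimes_k L$ with a localization of $(K_B\otimes_k L)[t]$ shows that, modulo the prime contracted from $\mathfrak q$, its fraction field is a rational function field in one variable over the fraction field of the corresponding quotient of $K_B\otimes_k L$. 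Passing to fraction fields of the quotients by $\mathfrak q$ and its contractions yields an $L$-diagram of degree at most $e$; for the $\scvg$ statement one writes $K_C=K[x]/(f)$ with $f$ separable, so $K_C\otimes_k L=(K\otimes_k L)[x]/(f)$ with $f$ separable over the reduced ring $K\otimes_k L$, and the extension produced is again separable.

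For the reverse inequality I would first handle the case that $L/k$ is finite. Then $K_L/K$ is finite, so in a diagram $K_C\supseteq K_L\supseteq K$ realizing $\cvg_L(K_L)=e$ the extension $K_C/K$ is finite, and the same triple $(K_C,K_B,t)$, re-read as a diagram of extensions of $k$, satisfies the conditions of Definition \ref{cvgdef2} and witnesses $\cvg_k(K)\le e$; when $L/k$ is separable, $K\otimes_k L=\prod_i K_i$ with each $K_i/K$ finite separable, so $K_L/K$ and hence $K_C/K$ is separable, giving the $\scvg$ version. For general $L$, a diagram realizing $\cvg_L(K_L)=e$ involves only finitely many elements of $L$ and finitely many polynomial relations, so it descends to a finitely generated sub-$k$-algebra $A\subseteq L$ and spreads out to a family of covering diagrams over a dense open of $\Spec A$. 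Shrinking $\Spec A$ so that the relevant morphisms stay finite of the same degree with geometrically irreducible fibres, and---in the $\scvg$ case---stay separable with $\Spec A\to\Spec k$ smooth, the fibre over a closed point $\mathfrak m$ is a covering diagram of degree at most $e$ for $K_{\kappa(\mathfrak m)}$ over $\kappa(\mathfrak m)$; the residue field $\kappa(\mathfrak m)$ is finite over $k$ by the Nullstellensatz, and separable over $k$ if $\mathfrak m$ is in the smooth locus. Hence $\cvg_{\kappa(\mathfrak m)}(K_{\kappa(\mathfrak m)})\le e$, and the finite case already treated gives $\cvg_k(K)\le e$.

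The main obstacle is this spreading-out step: one must check that, after restricting to a suitable dense open of $\Spec A$, the fibre of the family over a closed point is still a \emph{genuine} covering diagram---in particular that $[K_C:K_B(t)]$ does not drop below what is needed, and that in the $\scvg$ case separability is preserved and the closed point can be chosen with separable residue field. This is a matter of combining generic flatness with the openness of the loci where a morphism is finite of a given degree, has geometrically irreducible fibres, and is smooth, none of which is deep but all of which needs care. The finite-generation reduction is routine, and I note that for the applications in this paper---reducing to $\overline k$ or $k^{\mathrm{sep}}$---only the finite case is needed, $L/k$ being algebraic there.
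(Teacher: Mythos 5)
Your overall architecture is sound and in fact streamlines the paper's argument: the easy inequality is the same base-change construction, and for the reverse inequality the paper splits $L/k$ into an algebraic part (descend the coefficients of presentations of $K_B,K_C$ to a finite subextension $L'\subseteq L$ and re-read the diagram over $k$) and a purely transcendental part (reduce to $L=k(x)$, force $k$ infinite, and specialize a spread-out family at a general $k$-point of $\PP^1$), whereas you descend once and for all to a finitely generated $k$-subalgebra $A\subseteq L$, specialize at a closed point, and quote your finite case, avoiding both the induction on transcendence degree and the ``$k$ infinite'' manoeuvre. However, two specific assertions inside the specialization step---the step you yourself flag as the main obstacle---are false as stated, so as written there is a gap exactly at the crux.

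First, you cannot shrink $\Spec A$ so that the fibres are geometrically irreducible: only $K$ is assumed to have field of constants $k$; Definition \ref{cvgdef2} imposes nothing of the sort on $K_C$ or $K_B$. If, say, $k=\overline{k}$, $L=k(x)$ and $K_B$ contains $k(\sqrt{x})$, the generic fibre of the spread-out family over $\mathrm{Frac}(A)$ is not geometrically irreducible, and \emph{every} closed fibre of the natural model over $\mathbb{A}^1_k$ is non-integral, so no open shrinking helps. (The paper's transcendental case quietly skirts the same point when it asserts that $\OC_p$ and $B_p$ are integral for all but finitely many $p$.) The repair is to drop integrality of the closed fibre: fix a $k$-model $X$ of $K$, and at the chosen $\mathfrak m$ pass to an irreducible component, with reduced structure, of the closed fibre of $\mathcal C$ that dominates $X\times_k\kappa(\mathfrak m)$, together with the component of $\PP^1\times\mathcal B_{\mathfrak m}$ it maps to; since the specialized $\pi$ can be arranged finite flat of degree $e$, the function-field degree of such a component over its image is at most $e$, and in the $\scvg$ case one must additionally shrink $\Spec A$ so that the \'etale locus of $\mathcal C\to X\times\Spec A$ stays fibrewise dense, to keep the chosen component separable over $X\times_k\kappa(\mathfrak m)$. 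Second, smoothness of $\Spec A\to\Spec k$ at $\mathfrak m$ does not make $\kappa(\mathfrak m)/k$ separable: the point $(x^p-s)$ of $\mathbb{A}^1_{\mathbb{F}_p(s)}$ lies in the smooth locus but has purely inseparable residue field. What is true, and what you need, is that a smooth finite-type $k$-scheme has a \emph{dense} set of closed points whose residue fields are finite separable over $k$; a smooth model of $\mathrm{Frac}(A)$ is available here precisely because $L/k$ is separable, so $\mathrm{Frac}(A)/k$ is separably generated. With these two corrections (plus the easy observation that $K\otimes_k\kappa(\mathfrak m)$ has its function field finite, respectively finite separable, over $K$, so your finite case applies), your route does go through, and it is a legitimate alternative to the paper's two-case argument.
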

\begin{proof}
We only prove the first assertion; the second follows analogously. Any extension of $k$ is an algebraic extension of a purely transcendental extension of $k$, so we handle those two cases separately. First, suppose that $L/k$ is algebraic. Given a diagram of field extensions
\[
\begin{tikzcd}
K_C \arrow[r, hookleftarrow, "f"] 
&K\\
K_B(t)  \arrow[u,hookrightarrow,"\pi"]
\end{tikzcd},
\]
defined over $k$, let $K_C'$ be a quotient of $K_C\otimes_k L$ by some maximal ideal, which will be 0 if $K_C$ has field of constants $k$.
 Then the diagram
\[
\begin{tikzcd}
K_C' \arrow[r, hookleftarrow, "f"] 
&K\otimes_k L\\
(K_B\otimes_k L\cap K_C')(t)  \arrow[u,hookrightarrow,"\pi"]
\end{tikzcd}
\]
is a diagram of extensions as in Definition \ref{cvgdef2} and we have
\[
[K_C':(K_B\otimes_k L\cap K_C')(t) ]\leq [K_C:K_B(t)]. \]
So $\cvg_L(K\otimes_k L)\leq \cvg_k(K)$. For the other direction, suppose 
\[
\begin{tikzcd}
K_C \arrow[r, hookleftarrow, "f"] 
&K\otimes_k L\\
K_B(t)  \arrow[u,hookrightarrow,"\pi"]
\end{tikzcd}
\]
is a diagram of extensions over $L$ as in Definition \ref{cvgdef2}. Because $K_C$ and $K_B$ are finitely generated, there is some finitely generated extension $L'/k$ contained in $L$ such that $K_B$ admits a presentation $K_B\cong L(b_1,\ldots,b_k)/(r_1,\ldots, r_\ell)$ where the $r_i$ are polynomials in the $b_j$ with coefficients in $L'$ and $K_C$ admits a presentation $K_B(t)(c_1,\ldots,c_{k'})/(q_1,\ldots,q_{\ell'})$ where the $q_i$ are polynomials in the $c_j$ with coefficients in $L'(b_1,\ldots,b_k,t)$. Moreover, we can assume that the generators $c_i$ include the image in $K_C$ of a set of generators of $K/k$. 

 Let $K_B':=L'(b_1,\ldots,b_k)/(r_1,\ldots, r_\ell)$ and $K_C':=K'_B(t)(c_1,\ldots,c_{k'})/(q_1,\ldots,q_{\ell'})$. 
Since the $c_i$ include a set of generators of $K/k$, $K_C'$ is an extension of $K$, and $K_C'/K$ is finite because it is finitely generated and $K_C'$ is contained in the algebraic extension $K_C$ of $K$. And, noting $K_C\cong K_c'\otimes_{L'} L$ and $K_B\cong K_B'\otimes_{L'} L$, we have $[K_C:K_B(t)]=[K_C':K_B(t)']$. So we have $\cvg_L(K\otimes_k L)\geq \cvg_k(K)$, hence $\cvg_L(K\otimes_k L)=\cvg_k(K)$.

Now suppose $L$ is purely transcendental over $k$. $\cvg_L(K\otimes_k L)\leq \cvg_k(K)$ is clear by the same argument as above. Now suppose we have a diagram
\[
\begin{tikzcd}
K_C \arrow[r, hookleftarrow, "f"] 
&K\otimes_k L\\
K_B(t)  \arrow[u,hookrightarrow,"\pi"]
\end{tikzcd}.
\]
By picking some finite sets of generators and relations for $K_C/K_B(t)$, $K_C/K$ and $K_B/L$, and replacing $L$ by the field generated over $k$ by all coefficients in the relations defining these extensions, we may assume $L$ is finitely generated, so inductively we may assume $L=k(x)$. In addition, by the result for algebraic extensions, we may assume that $k$ is infinite. So for the rest of the proof, we assume that $k$ is infinite and $L=k(x)$.

First, given a diagram of algebraic function fields over $k$ as in (\ref{cvg2}), taking the tensor product of the diagram with $k(x)$ gives a covering diagram of $K\otimes_k k(x)$ of the same degree as that for $K$. So $\cvg_k(K)\geq \cvg_{k(x)}(K\otimes_k k(x))$.

In the other direction, first note that we can assume $k$ is infinite by replacing it as needed by some infinite extension; this does not change covering gonalities by the result for algebraic extensions above. Now suppose we have a covering diagram of $K\otimes_k k(x)$ over $k(x)$ as in (\ref{cvg2}). Let $\OC$, $B$, and $X$ be varieties over $k$ with function fields $K_C$, $K_B$ and $K$, so we have a diagram of rational maps
\[
\begin{tikzcd}
\OC \arrow[r,dashrightarrow, "f"] \arrow[d,dashrightarrow, "\pi"]
& X\times \PP^1 \\
\PP^1\times B 
\end{tikzcd}.
\]
Replacing $\OC$ by a open subvariety as needed, we may assume $f$ and $\pi$ are regular. Moreover, $\OC$ and $B$ both possess compatible rational maps to $\PP^1$, where on $\OC$ the map is given by composing $f$ with the projection $X\times \PP^1\ra \PP^1$. Because $f$ and $\pi$ are generically finite, for all but finitely many $k$-points $p$ in $\PP^1$ the maps of fibers 
\[
\begin{tikzcd}
\OC_p \arrow[r, "f"] \arrow[d, "\pi"]
& X \\
\PP^1\times B_p 
\end{tikzcd}
\]
are generically finite and $B_p$ and $\OC_p$ are both integral, and moreover the restriction of $\pi$ to $\OC_p\ra \PP^1\times B_p$ has unchanged degree. The function fields of one of these fibers form a covering diagram of $X/k$ of degree equal to $[K_C:K_B(t)]$, so $\cvg_k(K)\leq \cvg_{k(x)}(K\otimes_k k(x))$.
\end{proof}
\begin{proof}[Proof of Proposition \ref{fieldInvariance}]
Suppose $L/k$ is an extension of $k$ contained in a separable extension of $k^c$. Let $L^c$ be the field of constants of $K\otimes_{k^c\cap L} L$ over $L$, so $L^c$ is separable over $k^c$. Then by Lemmas \ref{constants} and \ref{arbitraryExtensions} we have
\[
\scvg_k(K)=\scvg_{k^c}(K)=\scvg_{L^c}(K\otimes_{k^c}L^c)=\scvg_{L}(K\otimes_{k^c}L^c)
\]
and we have $K\otimes_{k^c} L^c$ is isomorphic to any field contained in $K\otimes_{k} L$. For covering gonalities, suppose $L$ is an arbitrary extension of $k$. Letting $k^c$ and $L^c$ be as above, by Lemmas \ref{constants} and \ref{arbitraryExtensions} we have the chain of equalities
\[
\cvg_k(K)=\cvg_{k^c}(K)=\cvg_L(K \otimes_{k_c} L^c),
\]
proving the result.
\end{proof}
We now prove Proposition \ref{comparison}. We restate it here for reference.
\begin{manualProposition}{\ref{comparison}}
If $X$ is a proper irreducible variety defined over the field $k$ with function field $K$, then $\cvg(X)=\cvg_k(K)$ and  $\scvg_k(X)=\scvg_k(K)$.
\end{manualProposition}
\begin{proof}
One direction here is clear; if we have a diagram of irreducible varieties
\[
\begin{tikzcd}
\OC \arrow[r, "f"] \arrow[d, "\pi"]
& X \\
\PP^1\times B 
\end{tikzcd}
\]
as in Definition \ref{cvgdef}, taking function fields of every variety gives a covering diagram of $K$ in the sense of Definition \ref{cvgdef2}. So $\cvg(X)\geq \cvg_k(K)$ and $\scvg(X)\geq \scvg_k(K)$.

We now prove the inequality $\cvg(X)\leq \cvg_k(K)$. Suppose we have a diagram of field extensions of $K$,
\begin{equation*}\label{placeholder1}
\begin{tikzcd}
K_C \arrow[r, hookleftarrow, "f"] 
&K\\
K_B(t)  \arrow[u,hookrightarrow,"\pi"]
\end{tikzcd}.
\end{equation*}
 We will construct $\OC$ and $B$ with function fields $K_C$ and $K_B$ respectively that fit into a covering diagram (\ref{cvg}) of $X$ that produces the above diagram upon taking function fields.

Let $\OC_\eta$ be the unique regular projective curve defined over $K_B$ with function field $K_C$. Then over some variety $B'/k$ with function field $K_B$ there is a variety $\OC'$, a proper map $\pi:\OC'\ra B'$ and a fiber diagram
\[
\begin{tikzcd}
\OC_\eta\arrow[r, hookrightarrow] \arrow[d, "\pi"] &\OC'\arrow[d,"\pi"]\\
\mathrm{Spec}(K_B)\arrow[r,hookrightarrow]&B'.
\end{tikzcd}
\]
This can be shown by embedding $C_\eta$ in some projective space over $K_B$ and letting $B'$ be affine with fraction field $K_B$ such that $C_\eta$ is cut out by homogeneous polynomials with coefficients in $H^0(B',\OO_{B'})$. Moreover, since $\OC'$ has function field $K_B$, the function $t$ on $\OC'$ defines a rational map $\OC'\dashrightarrow \PP^1$. This is defined except on some closed subset $Z$ of $\OC'$ of codimension at least 2.
 In addition, let $Z'$ be the locus in $B'$ of $b$ such that $t$ is constant on some irreducible component of $\OC'\vert_{\pi^{-1}(b)}$; this is not all of $B$ since $t$ is transcendental over $K_B$. Define $B''=B'\setminus (\pi(Z)\cup Z')$, and let $\OC''$ be the open subset of $\OC'$ lying over $B''$. 
Finally, we have a rational map $f:\OC''\dashrightarrow X$, which is again defined except on a closed subset $Z''$ of codimension at least 2 on $\OC''$.
Then, defining $B=B''\setminus \pi(Z'')$ and $\OC=\pi^{-1}(B)$, we have exactly the desired diagram
\[
\begin{tikzcd}
\OC \arrow[r, "f"] \arrow[d, "\pi"]
& X \\
\PP^1\times B 
\end{tikzcd},
\]
where both $\pi$ and $f$ are regular by the construction, and $\pi$ is finite surjective because we removed the locus $Z'$ over which it failed to be finite surjective. Since $\pi$ is induced by the field extension $K_C/K_B(t)$, it has the same degree, and we conclude $\cvg(X)\leq \cvg_k(K)$, giving the desired equality.

If we, throughout this process, assumed $f$ was separable, no aspect of this construction would change, so we also have $\scvg_k(X)\leq \scvg_k(K)$.
\end{proof}

\bibliographystyle{plain}
\bibliography{Irrationality}
\end{document}